\definecolor{verylight}{gray}{0.97}
\definecolor{light}{gray}{0.9}
\definecolor{medium}{gray}{0.85}
\definecolor{dark}{gray}{0.6}
\def\NZQ{\mathbb}               
\def\ZZ{{\NZQ Z}}
\def\G{{\mathcal G}}
\def\pd{\textup{proj}\phantom{.}\!\textup{dim}}
\def\opn#1#2{\def#1{\operatorname{#2}}} 
\opn\chara{char} \opn\length{\ell} \opn\pd{pd} \opn\rk{rk}
\opn\projdim{proj\,dim} \opn\injdim{inj\,dim} \opn\rank{rank}
\opn\depth{depth} \opn\grade{grade} \opn\height{height}
\opn\embdim{emb\,dim} \opn\codim{codim}
\opn\Tr{Tr} \opn\bigrank{big\,rank}
\opn\superheight{superheight}\opn\lcm{lcm}
\opn\trdeg{tr\,deg}
\opn\reg{reg} \opn\lreg{lreg} \opn\ini{in} \opn\lpd{lpd}
\opn\size{size} \opn\sdepth{sdepth}
\opn\link{link}\opn\fdepth{fdepth}\opn\lex{lex}
\opn\tr{tr}
\opn\type{type}
\opn\gap{gap}
\opn\diam{diam}
\opn\Mod{Mod}
\opn\div{div} \opn\Div{Div} \opn\cl{cl} \opn\Cl{Cl}
\opn\Spec{Spec} \opn\Supp{Supp} \opn\supp{supp} \opn\Sing{Sing}
\opn\Ass{Ass} \opn\Min{Min}\opn\Mon{Mon}
\opn\Ann{Ann} \opn\Rad{Rad} \opn\Soc{Soc}
\opn\Im{Im} \opn\Ker{Ker} \opn\Coker{Coker} \opn\Am{Am}
\opn\Hom{Hom} \opn\Tor{Tor} \opn\Ext{Ext} \opn\End{End}
\opn\Aut{Aut} \opn\id{id}
\opn\nat{nat}
\opn\pff{pf}
\opn\Pf{Pf} \opn\GL{GL} \opn\SL{SL} \opn\mod{mod} \opn\ord{ord}
\opn\Gin{Gin} \opn\Hilb{Hilb}\opn\sort{sort}
\opn\PF{PF}\opn\Ap{Ap}
\opn\dist{dist}
\opn\aff{aff}
\opn\relint{relint} \opn\st{st}
\opn\lk{lk} \opn\cn{cn} \opn\core{core} \opn\vol{vol}  \opn\inp{inp} \opn\nilpot{nilpot}
\opn\link{link} \opn\star{star}\opn\lex{lex}\opn\set{set}
\opn\width{wd}
\opn\Fr{F}
\opn\QF{QF}
\opn\G{G}
\opn\type{type}\opn\res{res}
\opn\conv{conv}
\opn\sr{sr}
\opn\gr{gr}
\def\pot#1#2{#1[\kern-0.28ex[#2]\kern-0.28ex]}
\opn\dirlim{\underrightarrow{\lim}}
\opn\inivlim{\underleftarrow{\lim}}
\def\Implies{\ifmmode\Longrightarrow \else
	\unskip${}\Longrightarrow{}$\ignorespaces\fi}
\def\implies{\ifmmode\Rightarrow \else
	\unskip${}\Rightarrow{}$\ignorespaces\fi}
\def\iff{\ifmmode\Longleftrightarrow \else
	\unskip${}\Longleftrightarrow{}$\ignorespaces\fi}
\def\G{\mathcal{G}}
\newtheorem{theorem}{Theorem}[section]
\newtheorem{lemma}[theorem]{Lemma}
\newtheorem{corollary}[theorem]{Corollary}
\newtheorem{proposition}[theorem]{Proposition}
\newtheorem{question}[theorem]{Question}
\theoremstyle{definition}
\newtheorem{definition}[theorem]{Definition}
\newtheorem{setup}[theorem]{Setup}
\newtheorem{example}[theorem]{Example}
\theoremstyle{remark}
\newtheorem{remark}[theorem]{Remark}
\numberwithin{equation}{section}
\let\epsilon\varepsilon
\let\kappa=\varkappa
\def\qed{\ifhmode\textqed\fi
	\ifmmode\ifinner\hfill\quad\qedsymbol\else\dispqed\fi\fi}
\def\textqed{\unskip\nobreak\penalty50
	\hskip2em\hbox{}\nobreak\hfill\qedsymbol
	\parfillskip=0pt \finalhyphendemerits=0}
\def\dispqed{\rlap{\qquad\qedsymbol}}
\opn\dis{dis}
\def\pnt{{\raise0.5mm\hbox{\large\bf.}}}
\opn\Lex{Lex}
\def\aim{\textup{aim}}
\opn{\indm}{indm}
\begin{document}
	
\title{Matchings, Squarefree Powers and Betti Splittings}


\author{Marilena Crupi}
\address{Marilena Crupi, Department of mathematics and computer sciences, physics and earth sciences, University of Messina, Viale Ferdinando Stagno d'Alcontres 31, 98166 Messina, Italy}
\email{mcrupi@unime.it}

\author{Antonino Ficarra}
\address{Antonino Ficarra, Department of mathematics and computer sciences, physics and earth sciences, University of Messina, Viale Ferdinando Stagno d'Alcontres 31, 98166 Messina, Italy}
\email{antficarra@unime.it}

\author{Ernesto Lax}
\address{Ernesto Lax, Department of mathematics and computer sciences, physics and earth sciences, University of Messina, Viale Ferdinando Stagno d'Alcontres 31, 98166 Messina, Italy}
\email{erlax@unime.it}

\subjclass[2020]{13C15, 05E40, 05C70}

\date{}

\dedicatory{}

\commby{}

\subjclass[2020]{Primary 13C15, 05E40, 05C70}
\keywords{Normalized depth function, Castelnuovo--Mumford regularity, squarefree powers, matchings, edge ideals. }
\date{}

\begin{abstract}
	Let $G$ be a finite simple graph and let $I(G)$ be its edge ideal. In this article, we investigate the squarefree powers of $I(G)$ by means of Betti splittings. When $G$ is a forest, it is shown that the normalized depth function of $I(G)$ is non-increasing. Moreover, we compute explicitly the regularity function of squarefree powers of $I(G)$ with $G$ a forest, confirming a conjecture of Erey and Hibi.
\end{abstract}

\maketitle
	
\section*{Introduction}	
All the graphs we consider in this article are finite simple graphs. Let $G$ be a graph with the vertex set $V(G) = \{1, \ldots, n\}$ and the edge set $E(G)$. A $k$-\textit{matching} of $G$ is a subset $M$ of $E(G)$ of size $k$ such that $e\cap e'=\emptyset$ for all $e,e'\in M$ with $e\ne e'$. We denote by $V(M)$ the vertex set of $M$, that is, the set $\{i\in V(G):i\in e\ \text{for}\ e\in M\}$. We say that a graph $H$ is a \textit{subgraph of $G$} if $V(H)\subseteq V(G)$ and $E(H)\subseteq E(G)$. A subgraph $H$ of $G$ is said an \textit{induced subgraph} if for any two vertices $i, j$ in $H$, $\{i, j\}\in E(H)$ if and only if $\{i, j\}\in E(G)$. If $A$ is a subset of $V(G)$, the \textit{induced subgraph} on $A$, denoted by $G_A$, is the graph with vertex set $A$ and the edge set $\{\{i, j\}: \mbox{$i, j \in A$ and $\{i, j\}\in E(G)$}\}$. A matching $M$ is called an \textit{induced matching} if $E(G_{V(M)})=M$. The \textit{matching number} of $G$, denoted by $\nu(G)$, is the maximum size of a matching of $G$. Whereas, the \textit{induced matching number} of $G$, denoted by $\indm(G)$, is the maximum size of an induced matching of $G$. It is clear that $\indm(G)\le\nu(G)$ for any graph $G$.
	
Matchings play a pivotal role in graph theory \cite{H72}. Hereafter, we set $[n]=\{1,\dots,n\}$ if $n\ge1$ is an integer, and $[0]=\emptyset$. Let $G$ be a graph on the vertex set $[n]$ and let $S=K[x_1,\dots,x_n]$ be the standard graded polynomial ring over a field $K$. For a non--empty subset $A$ of $[n]$, we set ${\bf x}_A=\prod_{i\in A}x_i$.  Let $1\le k\le\nu(G)$. We denote by $I(G)^{[k]}$ the squarefree monomial ideal generated by ${\bf x}_{V(M)}$ for all $k$-matchings $M$ of $G$. If $k=1$, then $I(G)^{[1]}$ is the well--known ideal, called as the \textit{edge ideal} of $G$ \cite{RV, RVbook}, and we denote it simply by $I(G)$. More recently, matching theory has been related to Commutative Algebra \emph{via} the notion of \textit{squarefree powers} \cite{BHZN18,EHHM2022a}.
	
Let us explain the connection with the concept of squarefree power (see, for instance, \cite{BHZN18}). Let $I\subset S$ be a squarefree monomial ideal and $\G(I)$ be its unique minimal set of monomial generators. The \textit{$k$th squarefree power} of $I$, denoted by $I^{[k]}$, is the ideal generated by the squarefree monomials of $I^k$. Thus $u_1u_2\cdots u_k$, $u_i\in \G(I)$, $i\in[k]$, belongs to $\G(I^{[k]})$ if and only if $u_1,u_2,\dots,u_k$ is a regular sequence. Let $\nu(I)$ be the \textit{monomial grade} of $I$, \emph{i.e.}, the maximum among the lengths of a monomial regular sequence contained in $I$. Then $I^{[k]}\ne(0)$ if and only if $k\le\nu(I)$. Hence, the ideal $I(G)^{[k]}$ is the $k$th squarefree power of $I(G)$ and $\nu(I(G))=\nu(G)$.
	
The study of ordinary powers of ideals is a classical subject in Commutative Algebra. The fascination with this topic is due to the fact that many algebraic invariants behave asymptotically well, that is, stabilize or show a regular behaviour for sufficiently high powers. The first result in this area was obtained by Brodmann \cite{B79} who showed that the depth of the powers of an ideal $I$ of a Noetherian ring $R$ is eventually constant: there exists $k_0>0$ such that $\depth(R/I^{k})=\depth(R/I^{k_0})$ for all $k\ge k_0$. The study of the initial behaviour of the depth function of powers of graded ideals $I\subset S$ was initiated by Herzog and Hibi \cite{HH2005}. They made the boldest guess possible: any bounded convergent function $\varphi:\mathbb{Z}_{\ge0}\rightarrow\mathbb{Z}_{\ge0}$ is the depth function of some suitable graded ideal in a polynomial ring. Recently, this conjecture has been settled in affirmative by H\`a, Nguyen, Trung and Trung \cite[Theorem 4.1]{HNTT2021}.
	
Let $I\subset S$ be a squarefree monomial ideal and $k\in[\nu(I)]$. Let $d_k=\textup{indeg}(I^{[k]})$ be the \textit{initial degree} of $I^{[k]}$, that is, the minimum degree of a monomial of $\G(I^{[k]})$. In \cite[Proposition 1.1]{EHHM2022b}, the authors established that $$\depth(S/I^{[k]})\ge d_k-1$$ for all $k\in[\nu(I)]$. Based on this, they introduced the \textit{normalized depth function} of $I$, as follows
\[
g_I(k)=\depth(S/I^{[k]})-(d_k-1)
\]
where $k\in[\nu(I)]$. In contrast to the behaviour of the depth function of ordinary powers, it was predicted in \cite{EHHM2022b} that: \medskip
\\
\textbf{Conjecture:} \textit{For any $I\subset S$, the function $g_I(k)$ is a non-increasing function.}\medskip

Squarefree powers have been introduced in \cite{BHZN18}, and are currently studied by many researchers \cite{EF,EH2021,EHHM2022a,EHHM2022b,FPack2,FHH2022,SASF2022,SASF2023}.

Among graphs,  \textit{forests} are the simplest ones. In this article, we deeply investigate the squarefree powers of edge ideals of forests. Our result are based on the technique of \textit{Betti splitting}. In their groundbreaking article, Eliahou and Kervaire considered an earlier version of Betti splitting \cite[Proposition 3.1]{EK}. Later on, Francisco, H\`a and Van Tuyl, inspired by the results in  \cite{EK}, introduced the notion of Betti splitting \cite{FHT2009, Van2011}. Roughly speaking, if $I\subset S$ is a monomial ideal and $I_1$ and $I_2$ are sub--ideals of $I$ such that $\G(I)$ is the disjoint union of $\G(I_1)$ and $\G(I_2)$, then $I=I_1+I_2$ is said to be a Betti splitting, if the minimal free resolution of $I$ can be recovered from the minimal free resolutions of $I_1$, $I_2$ and $I_1\cap I_2$.
	
Our article is structured as follows. In Section \ref{Sec1:SqfreePow}, we consider Betti splittings of squarefree powers. Theorem \ref{Thm:CriterionBettiSplit} is a technical but powerful criterion that can be used to establish if a certain decomposition $I=I_1+I_2$ is a Betti splitting. This fact is used in Lemma \ref{Lemma:BettiSplit(I,x_n)} to reprove a result in \cite{FHH2022} under weaker assumptions (Proposition \ref{Prop:(I,x)sqfrPowers}).
	
Section \ref{Sec2:SqfreePow} deeply investigates squarefree powers of edge ideals of forests. For a vertex $v$ in $G$, we say $w$ is a \textit{neighbor} of $v$ if $\{v, w\}\in E(G)$. We denote the set of all neighbors of $v$ by $N_G(v)$. $N_G(v)$ is called the \textit{neighborhood} of $v$. The \textit{degree} of $v$, $\deg_G(v)$, is the number $|N_G(v)|$. If $\deg_G(v)=1$, $v$ is called a \textit{leaf}. Any forest has at least two leaves. A leaf $v\in V(G)$, with unique neighbor $w$, is called a \textit{distant leaf} if at most one of the neighbors of $w$ is not a leaf. The existence of a distant leaf in any forest $G$ (Proposition \ref{Prop:DistantLeaf}) provides a natural Betti splitting for $I(G)^{[k]}$ (Lemma \ref{Lem:I(G)ForestBettiSplit}). Then, using induction, we successfully prove that $g_{I(G)}$ is non-increasing, as stated in Theorem \ref{Thm:gITreeNonInc}, resolving a conjecture from \cite{EH2021}. We also compute the normalized depth function of the edge ideal of a path and use this result to obtain a general upper bound for the normalized depth function of the edge ideal of any graph.
	
In the last section, we compute the Castelnuovo--Mumford regularity $\reg(I(G)^{[k]})$ in terms of the combinatorics of the forest $G$ (Theorem \ref{Thm:ConjEreyHibi}), solving affirmatively a conjecture due to Erey and Hibi \cite[Conjecture 31]{EH2021}.

Finally, we would like to remark that not all squarefree monomial ideals admit a Betti splitting, as pointed out by Bolognini \cite[Example 4.6]{DB}, see also \cite[Example 4.2]{FHT2009}. On the other hand, Francisco, H\`a and Van Tuyl showed that the edge ideal 
of any graph 
always admits a Betti splitting \cite[Corollary 3.1]{FHT2009}. It is an open question if all the squarefree powers of an edge ideal admit a Betti splitting.

\section{Betti splittings for squarefree powers}\label{Sec1:SqfreePow}
Let $S=K[x_1,\dots,x_n]$ be the standard graded polynomial ring over a field $K$. Let $I\subset S$ be a monomial ideal. By $\G(I)$ we denote the unique minimal set of monomial generators of $I$. Let $I_1,I_2\subset S$ monomial ideals such that $\G(I)$ is the disjoint union of $\G(I_1)$ and $\G(I_2)$. We say that $I=I_1+I_2$ is a \textit{Betti splitting} \cite[Definition 1.1]{FHT2009} if
\begin{equation}\label{eq:BettiSplitEq}
	\beta_{i,j}(I)=\beta_{i,j}(I_1)+\beta_{i,j}(I_2)+\beta_{i-1,j}(I_1\cap I_2),\ \ \ \text{for all}\ i,j\ge0.
\end{equation}

Consider the natural short exact sequence
$$
0\rightarrow I_1\cap I_2\rightarrow I_1\oplus I_2\rightarrow I\rightarrow0.
$$
Then (\ref{eq:BettiSplitEq}) holds if and only if the following induced maps in $\Tor$ of the above sequence
$$
\Tor_i^S(K,I_1\cap I_2)\rightarrow\Tor_i^S(K,I_1)\oplus\Tor_i^S(K,I_2)
$$
are zero for all $i\ge0$ \cite[Proposition 2.1]{FHT2009}. In this case, we say that the inclusion map $I_1\cap I_2\rightarrow I_1\oplus I_2$ is \textit{$\Tor$-vanishing}. See  \cite{AM19,NV19} for more details on this subject.

The next criterion gives a useful method to prove that a map $J\rightarrow L$ is $\Tor$-vanishing, where $(0)\ne J\subset L$ are monomial ideals of $S$. It appeared implicitly for the first time in \cite{EK} (see also the proof of \cite[Lemma 4.2]{NV19}).

\begin{theorem}{\normalfont(\cite[Proposition 3.1]{EK})}\label{Thm:CriterionBettiSplit}
	Let $J,L\subset S$ be non--zero monomial ideals with $J\subset L$. Suppose there exists a map $\varphi:\G(J)\rightarrow \G(L)$ such that for any $\emptyset\ne\Omega\subseteq \G(J)$ we have
	$$
	\lcm(u:u\in\Omega)\in \mathfrak{m}(\lcm(\varphi(u):u\in\Omega)),
	$$
	where $\mathfrak{m}=(x_1,x_2,\dots,x_n)$. Then the inclusion map $J\rightarrow L$ is $\Tor$-vanishing.
\end{theorem}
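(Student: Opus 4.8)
The plan is to compute everything with Taylor complexes and to produce an explicit comparison map lifting the inclusion $J\hookrightarrow L$ which reduces to the zero map modulo $\mathfrak{m}$. Since $\Tor_i^S(K,-)$ can be computed from any free resolution, and the map on $\Tor$ induced by an inclusion of modules is represented by any chain map lifting it, such a comparison map forces $\Tor_i^S(K,J)\to\Tor_i^S(K,L)$ to be zero for every $i\ge 0$, which is exactly the assertion.

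First I would fix notation. Write $\G(J)=\{u_1,\dots,u_r\}$ and $\G(L)=\{v_1,\dots,v_s\}$ with a chosen ordering, and let $\mathbf{T}(J)$, $\mathbf{T}(L)$ be the Taylor complexes, regarded as $\mathbb{Z}^n$-graded free resolutions of $J$ and $L$: the degree-$i$ part of $\mathbf{T}(J)$ is free on symbols $f_A$ indexed by subsets $A\subseteq[r]$ with $|A|=i+1$, with $\deg f_A=\mathbf{m}_A:=\lcm(u_a:a\in A)$; the augmentation sends $f_{\{j\}}\mapsto u_j$ and the differential is $\partial f_A=\sum_{a\in A}\pm(\mathbf{m}_A/\mathbf{m}_{A\setminus a})f_{A\setminus a}$ with the usual Koszul signs; similarly for $\mathbf{T}(L)$, with symbols $g_B$ and $\mathbf{m}'_B:=\lcm(v_b:b\in B)$. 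Applying the hypothesis to a one-element set $\Omega$ gives $u_j\in\mathfrak{m}(\varphi(u_j))$; in particular $\varphi(u_j)\mid u_j$ for every $j$. Regarding $\varphi$ as a map $[r]\to[s]$, I would then define $\Phi\colon\mathbf{T}(J)\to\mathbf{T}(L)$ on Taylor bases by
\[
\Phi(f_A)=
\begin{cases}
\pm\,\dfrac{\mathbf{m}_A}{\mathbf{m}'_{\varphi(A)}}\,g_{\varphi(A)} & \text{if }\varphi|_A\text{ is injective},\\
0 & \text{otherwise,}
\end{cases}
\]
the sign being the one coming from sorting the sequence $(\varphi(a))_{a\in A}$. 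This is well defined because $\varphi(u_a)\mid u_a\mid\mathbf{m}_A$ for each $a\in A$, hence $\mathbf{m}'_{\varphi(A)}\mid\mathbf{m}_A$ and the coefficient is an honest monomial of $S$; moreover $\Phi$ preserves multidegrees, and in homological degree $0$ it lifts the inclusion, since $(u_j/\varphi(u_j))\,g_{\{\varphi(u_j)\}}$ augments to $u_j$.

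The main obstacle is to verify that $\Phi$ is a chain map, i.e.\ $\partial^{L}\Phi=\Phi\partial^{J}$. This is the functoriality of the Taylor complex under a change of generating set, and reduces to a computation with least common multiples and signs, which I would organize in two cases. If $\varphi|_A$ is injective, one expands both sides; the $\mathbf{m}$- and $\mathbf{m}'$-ratios telescope so that each of $\partial^{L}\Phi f_A$ and $\Phi\partial^{J}f_A$ becomes $\mathbf{m}_A$ times the sum over $a\in A$ of $\pm g_{\varphi(A)\setminus\varphi(a)}/\mathbf{m}'_{\varphi(A)\setminus\varphi(a)}$, and the signs match by the elementary behaviour of the sorting permutation under deletion. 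If $\varphi|_A$ is not injective, then $\Phi f_A=0$ and I must check $\Phi\partial^{J}f_A=0$: if $A$ has at least two coincidences among the values of $\varphi$, every $A\setminus a$ still has a coincidence and all terms die; if $A$ has exactly one coincidence $\varphi(a)=\varphi(a')$, only the terms indexed by $a$ and $a'$ survive, they both produce the symbol $g_{\varphi(A)}$, and their coefficients cancel because the two orderings involved differ by the transposition of $a$ and $a'$.

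Granting this, I would finish as follows. For every $A$ with $\varphi|_A$ injective, the hypothesis applied to $\Omega=\{u_a:a\in A\}$ gives
\[
\mathbf{m}_A=\lcm(u_a:a\in A)\in\mathfrak{m}\bigl(\lcm(\varphi(u_a):a\in A)\bigr)=\mathfrak{m}\cdot\mathbf{m}'_{\varphi(A)},
\]
so $\mathbf{m}_A/\mathbf{m}'_{\varphi(A)}$ is a non-unit monomial, hence maps to $0$ in $K=S/\mathfrak{m}$. Therefore $K\otimes_S\Phi$ sends every Taylor basis element to $0$ (those $f_A$ with $\varphi|_A$ non-injective being already killed by $\Phi$), so $K\otimes_S\Phi=0$ as a morphism of complexes. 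Passing to homology, $\Tor_i^S(K,J)\to\Tor_i^S(K,L)$ is the zero map for all $i\ge0$; that is, the inclusion $J\to L$ is $\Tor$-vanishing.
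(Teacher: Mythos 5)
Your proof is correct. The paper states this criterion without proof, deferring to Eliahou--Kervaire \cite{EK} and to the proof of \cite[Lemma 4.2]{NV19}, and your argument --- lifting the inclusion to a comparison map of Taylor complexes whose matrix entries $\mathbf{m}_A/\mathbf{m}'_{\varphi(A)}$ all lie in $\mathfrak{m}$ by hypothesis, so that the map dies after tensoring with $K$ --- is essentially the same argument used in those sources.
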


Let $I$ be a monomial ideal, following \cite{NV19} we denote by $\partial^* I$ the ideal generated by the elements of the form $f/x_i$, where $f$ is a minimal monomial generator of $I$ and $x_i$ is a variable dividing $f$. 

We recall the following result from \cite{NV19}.
\begin{lemma}\label{Lemma:EsistenzaVarphi}
	\textup{(\cite[Proposition 4.4]{NV19})} Let $J,L\subset S$ be non--zero monomial ideals. Suppose that $\partial^* J\subseteq L$. Then $J\subseteq\mathfrak{m}L$ and there exists a map $\varphi:\mathcal{G}(J)\rightarrow \mathcal{G}(L)$ satisfying the assumption of Theorem \ref{Thm:CriterionBettiSplit}. In particular, the inclusion map $J\rightarrow L$ is $\Tor$-vanishing.
\end{lemma}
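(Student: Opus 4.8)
The plan is to construct $\varphi$ by a single uniform rule: for each generator of $J$, remove the variable of \emph{largest} index dividing it. Precisely, for a monomial $u$ of positive degree put $M(u)=\max\{i:x_i\mid u\}$; then $u/x_{M(u)}$ is a generator of $\partial^*J$, so the hypothesis $\partial^*J\subseteq L$ forces $u/x_{M(u)}\in L$, and we may choose $\varphi(u)\in\G(L)$ with $\varphi(u)\mid u/x_{M(u)}$. This choice already yields $J\subseteq\mathfrak{m}L$, since $u=x_{M(u)}\cdot(u/x_{M(u)})\in x_{M(u)}L\subseteq\mathfrak{m}L$ for every $u\in\G(J)$ (we may assume $J\neq S$, so that all generators of $J$ have positive degree, as is the case for the ideals under consideration). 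For later use, record the two obvious features of this $\varphi$: $\varphi(u)\mid u$, and $\deg_{x_{M(u)}}\varphi(u)\le\deg_{x_{M(u)}}u-1$; moreover $\varphi(u)$ involves no variable of index exceeding $M(u)$.

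The core of the proof is verifying the $\lcm$-hypothesis of Theorem \ref{Thm:CriterionBettiSplit}. Fix $\emptyset\neq\Omega\subseteq\G(J)$ and set $m=\lcm(u:u\in\Omega)$ and $\ell=\lcm(\varphi(u):u\in\Omega)$. Since $\varphi(u)\mid u$ for every $u\in\Omega$, we have $\ell\mid m$, so it only remains to show that the quotient $m/\ell$ is a non-unit. Let $M^{\ast}=\max\{M(u):u\in\Omega\}$ and pick $u_1\in\Omega$ with $M(u_1)=M^{\ast}$; then $x_{M^{\ast}}\mid u_1$, whence $d:=\deg_{x_{M^{\ast}}}m\ge 1$. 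Now estimate $\deg_{x_{M^{\ast}}}\varphi(u)$ for each $u\in\Omega$: if $M(u)<M^{\ast}$ then $x_{M^{\ast}}\nmid u$, hence $\deg_{x_{M^{\ast}}}\varphi(u)=0\le d-1$; and if $M(u)=M^{\ast}$ then $\varphi(u)\mid u/x_{M^{\ast}}$, hence $\deg_{x_{M^{\ast}}}\varphi(u)\le\deg_{x_{M^{\ast}}}u-1\le d-1$. Taking the maximum over $u\in\Omega$ gives $\deg_{x_{M^{\ast}}}\ell\le d-1<d=\deg_{x_{M^{\ast}}}m$, so $x_{M^{\ast}}$ divides $m/\ell$; in particular $m\in\mathfrak{m}(\ell)$, which is exactly $\lcm(u:u\in\Omega)\in\mathfrak{m}\bigl(\lcm(\varphi(u):u\in\Omega)\bigr)$. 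Thus $\varphi$ meets the hypothesis of Theorem \ref{Thm:CriterionBettiSplit}, and that theorem delivers the $\Tor$-vanishing of $J\to L$.

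The one delicate point — and the step I would single out as the place to get right — is which variable to strip in the definition of $\varphi$: an arbitrary choice per generator does not suffice, because the $\lcm$-condition must hold \emph{simultaneously} over all subsets $\Omega$. Removing the \emph{maximal}-index variable repairs this: for any $\Omega$, the top index $M^{\ast}$ among the $M(u)$ is automatically stripped from $\varphi(u)$ for every $u\in\Omega$ (either $x_{M^{\ast}}$ does not divide $u$ at all, or it is precisely the variable that was removed), which forces the strict drop in $x_{M^{\ast}}$-degree that makes $m/\ell$ non-trivial. Everything beyond this observation is elementary divisibility bookkeeping together with a direct appeal to Theorem \ref{Thm:CriterionBettiSplit}.
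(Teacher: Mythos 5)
Your proof is correct; the paper itself gives no argument for this lemma, only the citation to \cite[Proposition 4.4]{NV19}, and your construction --- strip from each generator the variable of largest index, then observe that for any $\Omega$ the top index $M^{\ast}$ suffers a strict drop in degree when passing from $\lcm(u:u\in\Omega)$ to $\lcm(\varphi(u):u\in\Omega)$ --- is exactly the standard argument behind that citation. You are also right to flag that the uniformity of the choice (always the maximal index) is the essential point, since a generator-by-generator arbitrary choice of which variable to remove can fail the simultaneous $\lcm$-condition.
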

Note that for all $1\le\ell<k\le\nu(I)$, we have $\partial^* I^{[k]}\subset I^{[k-1]}\subseteq I^{[\ell]}$. Hence, the previous lemma implies immediately the following result.
\begin{corollary}\label{Cor:I{[k]}partial}
	Let $I\subset S$ be a squarefree monomial ideal. Then the map $I^{[k]}\rightarrow I^{[\ell]}$ is $\Tor$-vanishing for all $1\le\ell<k\le\nu(I)$.
\end{corollary}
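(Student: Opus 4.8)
The plan is to reduce the statement entirely to Lemma \ref{Lemma:EsistenzaVarphi} by verifying a single chain of containments of the form $\partial^* I^{[k]}\subseteq I^{[\ell]}$; once this is in place, the conclusion is immediate. So the whole proof amounts to unwinding the definition of $\partial^*$ against the combinatorial description of the generators of squarefree powers.

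First I would recall that a minimal monomial generator of $I^{[k]}$ has the shape $f=u_1u_2\cdots u_k$ with $u_1,\dots,u_k\in\G(I)$ a monomial regular sequence; since the $u_i$ are squarefree monomials, this just means $\supp(u_i)\cap\supp(u_j)=\emptyset$ for $i\ne j$, and in particular $f$ is squarefree. Now fix a variable $x_i$ dividing $f$. Then $x_i$ divides exactly one factor, say $x_i\mid u_1$, so $f/x_i=(u_1/x_i)\,u_2\cdots u_k$ is a multiple of $u_2\cdots u_k$. The monomial $u_2\cdots u_k$ lies in $\G(I^{[k-1]})$ because $u_2,\dots,u_k$ is again a regular sequence, now of length $k-1$. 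Hence $f/x_i\in I^{[k-1]}$, and as $f$ and $x_i$ were arbitrary this gives $\partial^* I^{[k]}\subseteq I^{[k-1]}$.

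Next I would note the elementary inclusion $I^{[k-1]}\subseteq I^{[\ell]}$ for every $\ell\le k-1$: any generator $v_1\cdots v_{k-1}$ of $I^{[k-1]}$ with $v_1,\dots,v_{k-1}$ a regular sequence is divisible by $v_1\cdots v_\ell\in\G(I^{[\ell]})$. Composing the two inclusions yields $\partial^* I^{[k]}\subseteq I^{[\ell]}$ for all $1\le\ell<k\le\nu(I)$. Both $I^{[k]}$ and $I^{[\ell]}$ are nonzero since $k,\ell\le\nu(I)$, so Lemma \ref{Lemma:EsistenzaVarphi} applies with $J=I^{[k]}$ and $L=I^{[\ell]}$: it provides a map $\varphi\colon\G(I^{[k]})\to\G(I^{[\ell]})$ meeting the hypothesis of Theorem \ref{Thm:CriterionBettiSplit}, hence the inclusion $I^{[k]}\subseteq\mathfrak m I^{[\ell]}\subseteq I^{[\ell]}$ is $\Tor$-vanishing.

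I do not expect any genuine obstacle here; the argument is essentially formal once one commits to the regular-sequence description of $\G(I^{[k]})$. The only point that deserves a line of care is the bookkeeping showing $\partial^* I^{[k]}\subseteq I^{[k-1]}$, namely that dividing a generator $u_1\cdots u_k$ by one variable leaves intact a full sub-product $u_2\cdots u_k$ which is itself a generator of the lower squarefree power; everything after that is a routine appeal to the cited lemma.
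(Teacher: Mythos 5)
Your proof is correct and follows exactly the paper's route: the paper disposes of this corollary in one line by observing $\partial^* I^{[k]}\subset I^{[k-1]}\subseteq I^{[\ell]}$ and invoking Lemma \ref{Lemma:EsistenzaVarphi}, which is precisely the chain of containments you verify in detail. Your careful unwinding of why dividing a generator $u_1\cdots u_k$ by a variable leaves the sub-product $u_2\cdots u_k\in\G(I^{[k-1]})$ intact is the same bookkeeping the paper leaves implicit.
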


Let $I=I_1+I_2$ with $\G(I_2)=\{u\in \G(I):x\ \textit{divides}\ u\}$ and $\G(I_1)=\G(I)\setminus \G(I_2)$, for some variable $x$. We say that $I=I_1+I_2$ is an \textit{$x$-partition}. In addition, if it is a Betti splitting, we say that $I=I_1+I_2$ is an \textit{$x$-splitting} \cite[Definition 2.6]{FHT2009}.

\begin{lemma}\label{Lemma:BettiSplit(I,x_n)}
	Let $I\subset K[x_1,\dots,x_{n-1}]$ be a squarefree monomial ideal. Then, for all $1<k\le\nu(I)$
	\begin{equation}\label{eq:(I,x_n)}
		(I,x_n)^{[k]}=I^{[k]}+x_nI^{[k-1]}\subset S
	\end{equation}
	is a Betti splitting.
\end{lemma}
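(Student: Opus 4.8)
The plan is to check separately the two ingredients in the definition of a Betti splitting: that (\ref{eq:(I,x_n)}) is an $x_n$-partition, and that the induced map on $\Tor$ from the intersection into the direct sum vanishes. Throughout I will use that $I^{[k]}\subseteq I^{[k-1]}$, which is immediate from the inclusion $\partial^*I^{[k]}\subseteq I^{[k-1]}$ recorded before Corollary \ref{Cor:I{[k]}partial}: for $g\in\G(I^{[k]})$ and a variable $x_i\mid g$ we have $g=x_i\cdot(g/x_i)$ with $g/x_i\in I^{[k-1]}$.

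First I would check that (\ref{eq:(I,x_n)}) is an $x_n$-partition. The minimal generators of $x_nI^{[k-1]}$ are the monomials $x_nv$ with $v\in\G(I^{[k-1]})$, each divisible by $x_n$, while no generator of $I^{[k]}$ is; hence $\G(I^{[k]})$ and $\G(x_nI^{[k-1]})$ are disjoint and, by (\ref{eq:(I,x_n)}), together generate $(I,x_n)^{[k]}$. To see that their union is the \emph{minimal} generating set, the only divisibility one must exclude is $w\mid x_nv$ with $w\in\G(I^{[k]})$ and $v\in\G(I^{[k-1]})$, i.e.\ $w\mid v$ since $x_n\nmid w$; but then, for any variable $x_i\mid w$, the monomial $w/x_i$ lies in $\partial^*I^{[k]}\subseteq I^{[k-1]}$, divides $v$, and has degree $\deg(w)-1<\deg(v)$, contradicting the minimality of $v$ in $I^{[k-1]}$.

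Next I would reduce the Betti-splitting condition. By \cite[Proposition 2.1]{FHT2009} it suffices to show that the inclusion $I^{[k]}\cap x_nI^{[k-1]}\rightarrow I^{[k]}\oplus x_nI^{[k-1]}$ induces the zero map on $\Tor_i^S(K,-)$ for all $i$, and since $\Tor$ commutes with finite direct sums this is equivalent to the $\Tor$-vanishing of the two inclusions $I^{[k]}\cap x_nI^{[k-1]}\rightarrow I^{[k]}$ and $I^{[k]}\cap x_nI^{[k-1]}\rightarrow x_nI^{[k-1]}$. A short monomial computation gives $I^{[k]}\cap x_nI^{[k-1]}=x_nI^{[k]}$: the containment $\supseteq$ follows from $I^{[k]}\subseteq I^{[k-1]}$, and for $\subseteq$, any monomial $f$ in the intersection is divisible by $x_n$ and by some $g\in\G(I^{[k]})$ with $x_n\nmid g$, so $g\mid f/x_n$, whence $f\in x_nI^{[k]}$.

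Finally I would prove the two $\Tor$-vanishings. Here neither Theorem \ref{Thm:CriterionBettiSplit} nor Lemma \ref{Lemma:EsistenzaVarphi} applies: one computes $\partial^*(x_nI^{[k]})=I^{[k]}+x_n\partial^*I^{[k]}$, which is contained neither in $I^{[k]}$ nor in $x_nI^{[k-1]}$. Instead, since $x_n$ is a nonzerodivisor on $S$, multiplication by $x_n$ gives degree-shift isomorphisms $I^{[j]}(-1)\cong x_nI^{[j]}$. Composing $I^{[k]}(-1)\cong x_nI^{[k]}$ with the inclusion $x_nI^{[k]}\rightarrow I^{[k]}$ produces the multiplication-by-$x_n$ map $I^{[k]}(-1)\rightarrow I^{[k]}$, which on $\Tor_i^S(K,-)$ is multiplication by $x_n\in\mathfrak{m}$ and therefore zero, because $\mathfrak{m}=\Ann_S(K)$ annihilates $\Tor_i^S(K,-)$; as the first map is an isomorphism, $x_nI^{[k]}\rightarrow I^{[k]}$ is $\Tor$-vanishing. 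For the remaining inclusion, the same isomorphisms identify $x_nI^{[k]}\rightarrow x_nI^{[k-1]}$ with the shifted inclusion $I^{[k]}(-1)\rightarrow I^{[k-1]}(-1)$, and $I^{[k]}\rightarrow I^{[k-1]}$ is $\Tor$-vanishing by Corollary \ref{Cor:I{[k]}partial} with $\ell=k-1$ (allowed since $k>1$); a degree shift and pre/post-composition with isomorphisms do not affect $\Tor$-vanishing. Combined with \cite[Proposition 2.1]{FHT2009}, this shows that $(I,x_n)^{[k]}=I^{[k]}+x_nI^{[k-1]}$ is a Betti splitting, in fact an $x_n$-splitting. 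The crux is this last step: the general-purpose criteria fail, and one must fall back on the annihilation of $\Tor_\bullet^S(K,-)$ by $\mathfrak{m}$ together with the shift-isomorphism reduction to Corollary \ref{Cor:I{[k]}partial}; the generator bookkeeping and the intersection identity are routine once $I^{[k]}\subseteq I^{[k-1]}$ is available.
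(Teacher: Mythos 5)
Your proof is correct and follows essentially the same route as the paper: the same $x_n$-partition, the same computation $I^{[k]}\cap x_nI^{[k-1]}=x_nI^{[k]}$, and the same reduction of the remaining component to the $\Tor$-vanishing of $I^{[k]}\rightarrow I^{[k-1]}$ via Corollary \ref{Cor:I{[k]}partial}. The only (equally valid) local difference is that you kill the component map $x_nI^{[k]}\rightarrow I^{[k]}$ by noting it is multiplication by $x_n\in\mathfrak{m}$ on $\Tor_i^S(K,-)$, whereas the paper observes that the relevant multigraded components of $\Tor_i^S(K,I^{[k]})$ vanish whenever $a_n>0$.
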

\begin{proof}
	Note that $(I,x_n)^{[k]}=I^{[k]}+x_nI^{[k-1]}$ is a $x_n$-partition. Moreover,
	$$
	I^{[k]}\cap x_nI^{[k-1]}=x_n[I^{[k]}\cap I^{[k-1]}]=x_nI^{[k]}.
	$$
	Thus, to prove that (\ref{eq:(I,x_n)}) is a Betti splitting, it remains to be shown that the inclusion map $x_nI^{[k]}\rightarrow I^{[k]}\oplus x_nI^{[k-1]}$ is $\Tor$-vanishing. That is, for all $i$ and all ${\bf a}=(a_1,\dots,a_n)\in\ZZ^n$, the map
	$$
	\Tor_i^S(K,x_nI^{[k]})_{\bf a}\rightarrow\Tor_i^S(K,I^{[k]})_{\bf a}\oplus\Tor_i^S(K,x_{n}I^{[k-1]})_{\bf a}
	$$
	is zero. But if $\Tor_i^S(K,x_nI^{[k]})_{\bf a}\ne0$, then $a_n>0$ since $x_n$ divides all minimal generators of $x_nI^{[k]}$. On the other hand, if $a_n>0$, then $\Tor_i^S(K,I^{[k]})_{\bf a}=0$ because $x_n$ does not divide any generator of $I^{[k]}$. Therefore, the map $x_nI^{[k]}\rightarrow I^{[k]}\oplus x_nI^{[k-1]}$ is $\Tor$-vanishing if and only if for all $i$ and all ${\bf a}=(a_1,\dots,a_n)\in\ZZ^n$ with $a_n>0$, the map $\Tor_i^S(K,x_nI^{[k]})_{\bf a}\rightarrow\Tor_i^S(K,x_{n}I^{[k-1]})_{\bf a}$
	is zero. This is equivalent to saying that $x_nI^{[k]}\rightarrow x_nI^{[k-1]}$ is $\Tor$-vanishing. This latter condition holds if and only if the map $I^{[k]}\rightarrow I^{[k-1]}$ is such. But this is obviously the case by Corollary \ref{Cor:I{[k]}partial}.
\end{proof}

The next result was proved in \cite[Proposition 2.4]{FHH2022} under the additional assumption that all squarefree powers $I^{[k]}$ are componentwise linear. This assumption was made to ensure that $I^{[k]}+x_{n}I^{[k-1]}$ is a Betti splitting \cite[Theorem 3.3]{DB}. Thanks to Lemma \ref{Lemma:BettiSplit(I,x_n)}, this hypothesis can be removed. Therefore, we have
\begin{proposition}\label{Prop:(I,x)sqfrPowers}
	Let $I\subset K[x_1,\dots,x_{n-1}]$ be a squarefree monomial ideal. Set $J=(I,x_n)$ and $d_k=\textup{indeg}(I^{[k]})$ for $1\le k\le\nu(I)$, $g_I(0)=g_I(\nu(I)+1)=+\infty$ and $d_0=0$. Then $\nu(J)=\nu(I)+1$ and for all $1\le k\le\nu(J)$,
	$$
	g_J(k)\ =\ \min\{g_I(k)+d_k-d_{k-1}-1,g_I(k-1)\}.
	$$
\end{proposition}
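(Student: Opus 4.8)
The plan is to feed the Betti splitting of Lemma~\ref{Lemma:BettiSplit(I,x_n)} into the identity \eqref{eq:BettiSplitEq} to obtain a closed formula for $\pd_S(J^{[k]})$, then to convert it to depth via Auslander--Buchsbaum and to unwind the definition of the normalized depth function. Write $S'=K[x_1,\dots,x_{n-1}]$, so that $I\subset S'$ and $S=S'[x_n]$; the case $I=(0)$ is trivial, so assume $\nu(I)\ge 1$.

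First I would dispose of the combinatorics. Since no minimal generator of $I$ involves $x_n$, we have $\G(J)=\G(I)\sqcup\{x_n\}$, and a monomial regular sequence consists of pairwise coprime monomials, at most one of which can be divisible by $x_n$; hence $\nu(J)=\nu(I)+1$, and --- as already recorded in \eqref{eq:(I,x_n)} --- $J^{[k]}=I^{[k]}+x_nI^{[k-1]}$ for all $1\le k\le\nu(J)$, with the conventions $I^{[0]}=S'$ and $I^{[\nu(I)+1]}=(0)$. Because deleting a factor from a product of pairwise coprime generators of $I$ drops the degree by at least one, $d_k\ge d_{k-1}+1$ for every $k$, and therefore
\[
\textup{indeg}(J^{[k]})=\min\{\textup{indeg}(I^{[k]}),\ \textup{indeg}(x_nI^{[k-1]})\}=\min\{d_k,\,d_{k-1}+1\}=d_{k-1}+1 ;
\]
this already identifies the normalization: $g_J(k)=\depth_S(S/J^{[k]})-d_{k-1}$.

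The main step is the passage to projective dimension. The three ideals in the splitting are, up to the ground-ring extension $S'\subset S$ and an irrelevant degree shift, extended from $S'$: tensoring a minimal graded free $S'$-resolution of a monomial ideal of $S'$ with the free extension $S=S'[x_n]$ yields a minimal graded free $S$-resolution of its extension, and $x_n\cdot(-)$ is an $S$-module isomorphism onto its image, so $\pd_S(I^{[j]}S)=\pd_S(x_nI^{[j]})=\pd_{S'}(I^{[j]})$. Lemma~\ref{Lemma:BettiSplit(I,x_n)} gives that $J^{[k]}=I^{[k]}+x_nI^{[k-1]}$ is a Betti splitting with $I^{[k]}\cap x_nI^{[k-1]}=x_nI^{[k]}$, so \eqref{eq:BettiSplitEq} yields $\pd_S(J^{[k]})=\max\{\pd_{S'}(I^{[k]}),\ \pd_{S'}(I^{[k-1]}),\ \pd_{S'}(I^{[k]})+1\}$, that is,
\[
\pd_S(J^{[k]})=\max\{\pd_{S'}(I^{[k-1]}),\ \pd_{S'}(I^{[k]})+1\} .
\]

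Finally I would translate and conclude. Applying the Auslander--Buchsbaum formula over $S$ and over $S'$ converts the last display into
\[
\depth_S(S/J^{[k]})=\min\{\depth_{S'}(S'/I^{[k-1]})+1,\ \depth_{S'}(S'/I^{[k]})\} ,
\]
and substituting $\depth_S(S/J^{[k]})=g_J(k)+d_{k-1}$ together with $\depth_{S'}(S'/I^{[j]})=g_I(j)+d_j-1$ turns the right-hand side into $\min\{g_I(k-1),\ g_I(k)+d_k-d_{k-1}-1\}$, the desired identity. The endpoint indices $k=1$ and $k=\nu(J)$ fall outside the range of Lemma~\ref{Lemma:BettiSplit(I,x_n)} and must be checked by hand: for $k=1$ one has $S/J\cong S'/I$, and for $k=\nu(J)$ one has $J^{[k]}=x_nI^{[\nu(I)]}$ with $I^{[k]}=(0)$, and a short direct computation in each case reproduces the stated conventions $g_I(0)=g_I(\nu(I)+1)=+\infty$ and $d_0=0$. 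I do not expect a serious obstacle: the only delicate points are that one genuinely needs the Betti-splitting identity, not merely the defining short exact sequence, in order to turn Betti numbers into an \emph{exact} projective-dimension formula, and the bookkeeping of the change of ring $S'\subset S$ and of the initial-degree shift at the two boundary values of $k$.
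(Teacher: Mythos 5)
Your proposal is correct and follows essentially the route the paper intends: it feeds the Betti splitting $J^{[k]}=I^{[k]}+x_nI^{[k-1]}$ of Lemma \ref{Lemma:BettiSplit(I,x_n)} into the additivity of Betti numbers to get an exact projective-dimension (equivalently depth) formula, and then normalizes using $\textup{indeg}(J^{[k]})=d_{k-1}+1$. The paper itself omits the details, deferring to the argument of \cite[Proposition 2.4]{FHH2022} with Lemma \ref{Lemma:BettiSplit(I,x_n)} replacing the componentwise-linearity hypothesis; your write-up, including the boundary cases $k=1$ and $k=\nu(J)$, supplies exactly those details correctly.
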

\section{Squarefree powers of forests}\label{Sec2:SqfreePow}
A graph $G$ is called a \textit{forest} if it is acyclic. A connected forest is called a \textit{tree}. A \textit{leaf} $v$ of a graph $G$ is a vertex incident to only one edge. Any tree possesses at least two leaves. Let $v\in V(G)$ be a leaf and $w$ be the unique neighbor of $v$. Following \cite{EH2021}, we say that $v$ is a \textit{distant leaf} if at most one of the neighbors of $w$ is not a leaf. In this case, we say that $\{w,v\}$ is a \textit{distant edge}. The following picture displays this situation: 
\vspace*{-0.3cm}

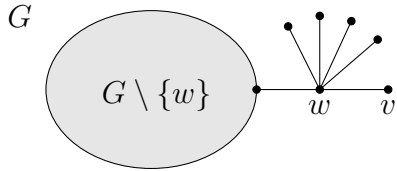
\begin{figure}[H]
\centering
\begin{tikzpicture}[scale=0.7]
	\filldraw[fill=black!10!white] (0,0) ellipse (2cm and 1.5cm);
	\filldraw (2,0) circle (2pt);
	\filldraw (3.2,0) circle (2pt) node[below]{$w$};
	\filldraw (4.5,0) circle (2pt) node[below]{$v$};
	\filldraw (2.6,1.2) circle (2pt);
	\filldraw (3.2,1.4) circle (2pt);
	\filldraw (3.8,1.3) circle (2pt);
	\filldraw (4.3,0.95) circle (2pt);
	\draw[-] (3.2,0)--(2,0);
	\draw[-] (4.5,0)--(3.2,0);
	\draw[-] (3.2,0)--(2.6,1.2);
	\draw[-] (3.2,0)--(3.2,1.4);
	\draw[-] (3.2,0)--(3.8,1.3);
	\draw[-] (3.2,0)--(4.3,0.95);
	\filldraw (-2.5,1.8) node[below]{$G$};
	\filldraw (0.1,0.4) node[below]{$G\setminus\{w\}$};
\end{tikzpicture}
\caption{A forest $G$ with distant edge $\{w,v\}$}\label{figure}
\end{figure}\vspace*{-0.15cm}
\noindent The gray area represents the graph $G\setminus\{w\}$.
Next picture clarifies Figure \ref{figure}.
\begin{figure}[H]
	\centering
	\begin{tikzpicture}[scale=0.6]
		\filldraw (2,0) circle (2pt);
		\filldraw (-0.9,0) circle (2pt);
		\filldraw (0.6,0) circle (2pt);
		\filldraw (0.6,1.2) circle (2pt);
		\filldraw (3.2,0) circle (2pt) node[below]{$w$};
		\filldraw (4.5,0) circle (2pt) node[below]{$v$};
		\filldraw (2.6,1.2) circle (2pt);
		\filldraw (3.2,1.4) circle (2pt);
		\filldraw (3.8,1.3) circle (2pt);
		\filldraw (4.3,0.95) circle (2pt);
		\draw[-] (-0.9,0)--(3.2,0);
		\draw[-] (4.5,0)--(3.2,0);
		\draw[-] (3.2,0)--(2.6,1.2);
		\draw[-] (3.2,0)--(3.2,1.4);
		\draw[-] (3.2,0)--(3.8,1.3);
		\draw[-] (3.2,0)--(4.3,0.95);
		\filldraw (6,0) circle (2pt);
		\filldraw (7.3,0) circle (2pt);
		\filldraw (7,1.1) circle (2pt);
		\filldraw (7.6,1.1) circle (2pt);
		\filldraw (8.6,0) circle (2pt);
		\draw[-] (6,0) -- (8.6,0);
		\draw[-] (7.3,0) -- (7,1.1);
		\draw[-] (7.3,0) -- (7.6,1.1);
		\draw[-] (0.6,0) -- (0.6,1.2);
		\filldraw (-2.5,1.8) node[below]{$G$};
	\end{tikzpicture}
\end{figure}\vspace*{-0.15cm}

It describes a forest $G$ with $\{w,v\}$ as a distant edge.

Our goal is to compute explicitly the regularity function $\reg(I(G)^{[k]})$ and to prove that the normalized depth function $g_{I(G)}(k)$ of $I(G)$ is a non-increasing function when $G$ is a forest. We state our first result below and provide its proof after a few auxiliary results.
\begin{theorem}\label{Thm:gITreeNonInc}
	Let $G$ be a forest. Then $g_{I(G)}$ is non-increasing.
\end{theorem}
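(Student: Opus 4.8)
The plan is to argue by induction on the number of vertices $n$ of the forest $G$, using the distant edge $\{w,v\}$ guaranteed by Proposition~\ref{Prop:DistantLeaf} to produce a Betti splitting of $I(G)^{[k]}$ via Lemma~\ref{Lem:I(G)ForestBettiSplit}. Concretely, let $v$ be a distant leaf with unique neighbor $w$, and split $I(G)^{[k]}$ according to divisibility by the variable $x_w$: writing $\G(I(G)^{[k]}_2)$ for those $k$-matching monomials using the edge $\{w,v\}$ or some edge at $w$, and $\G(I(G)^{[k]}_1)$ for the rest. One expects the natural identifications $I(G)^{[k]}_1 = I(G\setminus w)^{[k]}$ (matchings avoiding $w$ entirely, since $v$ becomes isolated once $w$ is removed) and a description of $I(G)^{[k]}_2$ and of the intersection $I(G)^{[k]}_1\cap I(G)^{[k]}_2$ in terms of $x_w x_v$ times squarefree powers of the forest obtained by deleting the closed neighborhoods of $w$ and/or $v$. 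The precise combinatorial bookkeeping here is exactly what Lemma~\ref{Lem:I(G)ForestBettiSplit} should supply; I will invoke it.

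Once the Betti splitting is in place, the depth of $S/I(G)^{[k]}$ is controlled by the standard Betti-splitting formula: since $\beta_{i,j}(I(G)^{[k]}) = \beta_{i,j}(I_1) + \beta_{i,j}(I_2) + \beta_{i-1,j}(I_1\cap I_2)$, one gets
\[
\depth(S/I(G)^{[k]}) = \min\{\depth(S/I_1),\ \depth(S/I_2),\ \depth(S/(I_1\cap I_2)) - 1\},
\]
where the three ideals live in suitable (smaller, or variable-disjoint) polynomial rings. The induction hypothesis applies to each summand, because each is the edge ideal of a forest on fewer vertices (after ignoring the handful of vertices $w$, $v$, and the leaves attached to $w$, which contribute only polynomial-ring variables that do not affect the normalized depth function). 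Tracking the initial degrees $d_k$ through these identifications — each edge removed from the matching count raises the initial degree by $2$ — converts the depth formula into the corresponding $\min$-formula for $g_{I(G)}(k)$ in terms of $g$ of the smaller forests evaluated at $k$ and at $k-1$, in the spirit of Proposition~\ref{Prop:(I,x)sqfrPowers}. Since each term on the right is non-increasing in $k$ by induction (with the convention $g(0) = g(\nu+1) = +\infty$ handling the boundary), their minimum is non-increasing, which is exactly the claim.

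The base cases are small forests — the empty edge set (nothing to prove) or a single edge, where $\nu = 1$ and there is nothing to check — together with perhaps a direct computation for a path or star if the inductive identifications degenerate; these are routine.

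The main obstacle will be the precise combinatorial analysis underlying Lemma~\ref{Lem:I(G)ForestBettiSplit} and the resulting three-term depth comparison: one must show that the decomposition $I(G)^{[k]} = I_1 + I_2$ along $x_w$ really is a Betti splitting (this is where the $\Tor$-vanishing criterion of Theorem~\ref{Thm:CriterionBettiSplit}, via $\partial^*$ and Lemma~\ref{Lemma:EsistenzaVarphi}, must be checked for $I_1\cap I_2 \hookrightarrow I_1\oplus I_2$), and that the three ideals $I_1$, $I_2$, $I_1\cap I_2$ are, up to extra variables, squarefree powers of genuinely smaller forests so that the induction hypothesis is applicable with the correct shift in initial degree. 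Handling the edge case where $w$ has no non-leaf neighbor other than the path back into $G$ — so that $I_2$ or the intersection becomes a squarefree power over a disjoint union — and making sure the $d_k$ arithmetic lines up with the $+\infty$ boundary conventions, is the delicate part; everything else is bookkeeping.
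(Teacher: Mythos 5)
Your overall strategy is the same as the paper's: induct on $|V(G)|$, use the distant leaf to get the Betti splitting of Lemma \ref{Lem:I(G)ForestBettiSplit}, convert the resulting depth formula $\depth(S/I(G)^{[k]})=\min\{\depth(S/I_1),\depth(S/I_2),\depth(S/(I_1\cap I_2))-1\}$ into a $\min$-formula for $g_{I(G)}(k)$ in terms of $g$ of smaller forests at $k$ and $k-1$ (this is Corollary \ref{cor:gIRecursiveFormula} in the paper, which also needs the further splitting $J=J_1+J_2$ of Lemma \ref{Lemma:BettiSplittingJ=J1+J2} to evaluate $\depth(S/(I_1\cap I_2))$), and then compare the two $\min$-lists term by term. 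Two small discrepancies: the splitting in the paper is an $x_v$-partition by the \emph{leaf} variable (matchings containing the edge $\{w,v\}$ versus matchings avoiding $v$), not the $x_w$-partition you describe; and the degenerate base case is not a path or star but the case where every component is a single edge, where the distant-leaf setup breaks down and the paper computes $g_{I(G)}(k)=\frac{n}{2}-k$ directly via the squarefree Veronese ideal. Neither of these affects the architecture of the argument.

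The one genuine gap is in the final step, where you assert that the minimum of the right-hand terms is non-increasing ``with the convention $g(0)=g(\nu+1)=+\infty$ handling the boundary.'' The term-by-term comparison between the lists for $g_{I(G)}(k+1)$ and $g_{I(G)}(k)$ requires $g_{I(G_i)}(\ell+1)\le g_{I(G_i)}(\ell)$ for each argument $\ell$ that occurs, and this fails at the top of the range: if $k+1>\nu(G_i)$ for some $i$ (which can happen, since $\nu(G_i)$ may equal $\nu(G)-1$), the corresponding term in the $k+1$ list is $+\infty$ while the term in the $k$ list is finite, and the inequality between the minima no longer follows from the inductive hypothesis alone. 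The paper closes this by (i) disposing of $k+1=\nu(G)$ outright using $g_{I(G)}(\nu(G))=0$ (from \cite[Corollary 3.5]{EHHM2022b}) together with $g\ge0$, and (ii) for $k+1<\nu(G)$ verifying that $\nu(G)=\nu(G_3)+1\le\nu(G_i)+1$, so that every argument appearing in either $\min$-list stays within the valid range $[1,\nu(G_i)]$ and the term-by-term comparison is legitimate. You would need to supply one of these two ingredients (or an equivalent) to make the concluding step rigorous; as written, the $+\infty$ convention does not do the job at the upper boundary.
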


The previous theorem and the results in this section are based upon the following proposition. It already appeared in \cite[Proposition 9.1.1]{J2004} and \cite[Lemma 20]{EH2021}. But for the sake of completeness we include a different shorter proof.
\begin{proposition}\label{Prop:DistantLeaf}
	Let $G$ be a forest. Then $G$ has a distant leaf.
\end{proposition}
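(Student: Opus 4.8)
The plan is to proceed by induction on the number of vertices of $G$, after first reducing to the case of a tree. Since a forest is a disjoint union of trees, it suffices to show that at least one connected component contains a distant leaf; thus we may assume $G$ is a tree on $n$ vertices. The base cases $n=1$ and $n=2$ are trivial (a single edge is itself a distant edge), so assume $n\ge 3$ and that every tree on fewer vertices has a distant leaf.

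The key idea is to consider a longest path in $G$. Let $P\colon v_0,v_1,\dots,v_d$ be a path in $G$ of maximum length $d$; since $n\ge 3$ and $G$ is connected we have $d\ge 1$, and in fact $d\ge 2$ unless $G$ is a single edge. I would first argue that $v_0$ is a leaf: if $v_0$ had a neighbor $w\notin P$, then $w,v_0,v_1,\dots,v_d$ would be a longer path (it cannot close up into a path through $P$ since $G$ is acyclic), and if $v_0$ were adjacent to some $v_j$ with $j\ge 2$ we would get a cycle. So $w:=v_1$ is the unique neighbor of $v_0$. Now examine the neighbors of $w=v_1$. Each neighbor $u$ of $v_1$ other than $v_0$ and $v_2$: such a $u$ must be a leaf, for otherwise $u$ has a neighbor $u'\ne v_1$, and then $u',u,v_1,v_2,\dots,v_d$ is a path of length $d+1$ (acyclicity rules out $u'$ coinciding with any $v_j$ or creating a shortcut), contradicting maximality of $P$.

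This shows that among the neighbors of $w=v_1$, every one except possibly $v_2$ is a leaf — which is precisely the condition that $v_0$ is a distant leaf with distant edge $\{v_1,v_0\}$. Hence the longest-path argument produces a distant leaf directly, and no induction is actually needed; I would streamline the write-up to the longest-path argument alone, keeping the inductive framing only as a fallback in case a cleaner reduction is desired. The main obstacle — really the only subtlety — is the careful case analysis ruling out that extending the path creates a repeated vertex: one must invoke acyclicity at each step to ensure the claimed longer walks are genuinely paths. This is routine once stated, but it is where all the hypotheses (connectedness to get a path through the whole component, acyclicity to forbid shortcuts) are used, so it deserves to be spelled out rather than waved through.
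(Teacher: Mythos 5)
Your argument is correct and is essentially the same as the paper's: both take a path of maximal length in a tree, observe that its endpoint is a leaf, and show that any non-leaf neighbor of the adjacent vertex (other than the one continuing the path) would allow the path to be extended, contradicting maximality. The only cosmetic difference is that the paper works at the terminal end $v_r$ of the path rather than the initial end $v_0$, and phrases the path as an induced path (which is automatic in a forest).
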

\begin{proof}
	It is enough to assume that $G$ is a tree. Let $v_1,v_2,\dots,v_r$ be an induced path of $G$ of maximal length. We claim that $v_{r}$ is a distant leaf. Indeed, let $N_G(v_{r-1})=\{w_1,\dots,w_s\}$. Up to a relabeling, we may assume $w_1=v_{r-2}$ and $w_2=v_r$. Since $\deg_G(w_2)=1$, it is enough to show that $\deg_G(w_i)=1$ for $i=3,\dots,s$. Suppose for a contradiction that this is not the case. Thus $\deg_G(w_i)>1$ for some $i\in\{3,\dots,s\}$. Let $u$ be a neighbor of $w_i$ different from $v_{r-1}$. Then, $v_1,v_2,\dots,v_{r-1},w_i,u$ is an induced path of $G$ of length $r+1$, a contradiction. The assertion follows.
\end{proof}

We fix the following setup. We will consider forests $G$ satisfying $\nu(G)\ge3$. The cases $\nu(G)=1,2$ will be addressed directly in the proof of Theorem \ref{Thm:gITreeNonInc}.
\begin{setup}\label{SetupG}
	Let $G$ be a forest with the vertex set $[n]$ and $\nu(G)\ge3$. Let $n\in V(G)$ be a distant leaf with distant edge $\{n-1,n\}\in E(G)$ such that $N_G(n-1)=\{i_1,\dots,i_t,n-2,n\}$ with $t\ge0$, $\deg_G(i_j)=1$ for $j=1,\dots,t$, $\deg_G(n)=1$ and $\deg_G(n-2)\ge1$. Let $G_1$, $G_2$ and $G_3$ be the induced subgraphs of $G$ on the vertex sets $[n-1]$, $[n-2]$ and $[n-3]$, respectively. Note that $|V(G_1)|,|V(G_2)|,|V(G_3)|<n$, and moreover $I(G_3)\ne0$ because $\nu(G)\ge3$. Indeed, if $I(G_3)=0$, then either $G_3$ contains no vertices, or all vertices of $G_3$ are adjacent to $n-2$. Then,  $\{n-2,n-1\}\in E(G)$ and the other edges $\{i,j\}$ of $G$ have $i=n-2$ or $i=n-1$. Then, it is clear that $\nu(G)\le2$, against our assumption.
\end{setup}

\begin{lemma}\label{Lem:I(G)ForestBettiSplit}
	Assume Setup \ref{SetupG}. Then, for all $1\leq k\leq \nu(G)$,
	\begin{equation}\label{eq:I(G)BettiSplitGForest}
		I(G)^{[k]}=I(G_1)^{[k]}+x_nx_{n-1}I(G_2)^{[k-1]}
	\end{equation}
	is a Betti splitting.
\end{lemma}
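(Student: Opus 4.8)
The plan is to verify that the decomposition \eqref{eq:I(G)BettiSplitGForest} is first an $x_n$-partition and then, using the machinery of Section~\ref{Sec1:SqfreePow}, a Betti splitting. For the partition claim, note that a $k$-matching $M$ of $G$ either uses the distant edge $\{n-1,n\}$ or not. If it does not use $\{n-1,n\}$, then since $n$ is a leaf with unique neighbor $n-1$, the vertex $n$ does not belong to $V(M)$, so $\mathbf{x}_{V(M)}$ is a generator of $I(G_1)^{[k]}$. If $M$ does use $\{n-1,n\}$, then $M\setminus\{\{n-1,n\}\}$ is a $(k-1)$-matching of $G$ avoiding all edges incident to $n-1$ and $n$; since the only neighbor of $n$ is $n-1$ and the neighbors of $n-1$ other than $n-2$ are the leaves $i_1,\dots,i_t$, this remaining matching actually lives in $G_2$ (the induced subgraph on $[n-2]$), and $\mathbf{x}_{V(M)}=x_nx_{n-1}\mathbf{x}_{V(M\setminus\{\{n-1,n\}\})}\in x_nx_{n-1}I(G_2)^{[k-1]}$. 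One must also check that these two sets of generators are disjoint and minimal, which is immediate since $x_n$ divides every generator in the second piece and no generator in the first.

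Next I would compute the intersection. Write $I_1=I(G_1)^{[k]}$ and $I_2=x_nx_{n-1}I(G_2)^{[k-1]}$. Every generator of $I_2$ is divisible by $x_nx_{n-1}$, while no generator of $I_1$ is divisible by $x_n$. I claim $I_1\cap I_2 = x_nx_{n-1}I(G_1)^{[k]}$ — or more precisely $x_nx_{n-1}\bigl(I(G_1)^{[k]}\cap I(G_2)^{[k-1]}\bigr)$, and I expect this simplifies. A squarefree monomial lies in $I_1\cap I_2$ iff it is a squarefree multiple of some generator of $I(G_1)^{[k]}$ and, simultaneously, equals $x_nx_{n-1}$ times a squarefree multiple of a generator of $I(G_2)^{[k-1]}$. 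Since generators of $I(G_2)^{[k-1]}$ involve only variables $x_1,\dots,x_{n-2}$ and generators of $I(G_1)^{[k]}$ only variables $x_1,\dots,x_{n-1}$, one should get $I_1\cap I_2 = x_nx_{n-1}I(G_1)^{[k]}$ using that $\partial^* I(G_1)^{[k]}\subset I(G_1)^{[k-1]}$ and the analogous relation between matchings of $G_1$ and $G_2$ — this bookkeeping about which matchings of $G_1$ restrict to matchings of $G_2$ is where I expect the real combinatorial content to sit.

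Having identified $I_1\cap I_2$, the remaining task is to show the inclusion map $I_1\cap I_2\to I_1\oplus I_2$ is $\Tor$-vanishing. As in the proof of Lemma~\ref{Lemma:BettiSplit(I,x_n)}, I would argue degreewise: if $\Tor_i^S(K,I_1\cap I_2)_{\mathbf a}\ne 0$ then $a_n>0$ (and $a_{n-1}>0$) because $x_nx_{n-1}$ divides all generators of $I_1\cap I_2$; but then $\Tor_i^S(K,I_1)_{\mathbf a}=0$ since $x_n$ divides no generator of $I_1$. So it suffices that $I_1\cap I_2\to I_2$ be $\Tor$-vanishing, which (cancelling the common factor $x_nx_{n-1}$) reduces to showing the map $I(G_1)^{[k]}\to I(G_2)^{[k-1]}$ is $\Tor$-vanishing. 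To see this I would invoke Lemma~\ref{Lemma:EsistenzaVarphi}: it is enough to check $\partial^*I(G_1)^{[k]}\subseteq I(G_2)^{[k-1]}$. A generator $\mathbf{x}_{V(M)}/x_j$ of $\partial^* I(G_1)^{[k]}$, where $M$ is a $k$-matching of $G_1$ and $j\in V(M)$, is divisible by $\mathbf{x}_{V(M')}$ where $M'=M\setminus\{e\}$ for the edge $e\in M$ containing $j$; this $M'$ is a $(k-1)$-matching and one checks it can be chosen to avoid vertex $n-1$ (for $k\ge 2$ there is always an edge of $M$ not incident to $n-1$ to delete), hence $M'$ is a $(k-1)$-matching of $G_2$. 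Thus $\partial^*I(G_1)^{[k]}\subseteq I(G_2)^{[k-1]}$ and Lemma~\ref{Lemma:EsistenzaVarphi} applies. The case $k=1$ must be handled separately but is easy: then $I_2=x_nx_{n-1}I(G_2)^{[0]}=x_nx_{n-1}S$ is principal, $I_1\cap I_2$ is principal, all higher Tor of a principal ideal vanish, and the Betti splitting condition is immediate. The main obstacle, as noted, is the careful verification that restricting and extending matchings translates correctly between $G$, $G_1$ and $G_2$ under the hypotheses of Setup~\ref{SetupG} — in particular exploiting that the neighbors of $n-1$ besides $n-2$ are all leaves, so they contribute no edges once $n-1$ is removed.
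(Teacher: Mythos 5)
Your setup (the $x_n$-partition, and the reduction of $\Tor$-vanishing to the single map $I_1\cap I_2\to I_2$ by looking at the $x_n$-degree) agrees with the paper, but the two computations you then rely on are both false, and they are exactly where the content of the lemma lies. First, the intersection is not $x_nx_{n-1}I(G_1)^{[k]}$ (nor $x_nx_{n-1}\bigl(I(G_1)^{[k]}\cap I(G_2)^{[k-1]}\bigr)$, which is the same ideal since $I(G_1)^{[k]}\subseteq I(G_2)^{[k-1]}$). Take $G=P_7$ with $n=7$ and $k=2$: the monomial $m=x_1x_2x_5x_6x_7$ lies in $I(P_6)^{[2]}$ (its support contains the $2$-matching $\{1,2\},\{5,6\}$) and in $x_7x_6I(P_5)^{[1]}$ (it is divisible by $x_7x_6x_1x_2$), yet it has degree $5$ while every generator of $x_7x_6I(P_6)^{[2]}$ has degree $6$. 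The point you miss is that when a generator $a$ of $I(G_1)^{[k]}$ already involves $x_{n-1}$, the monomial $\lcm(a,x_nx_{n-1}b)$ picks up only one copy of $x_{n-1}$; this forces a case analysis on which edge of the matching meets $n-1$ and produces the paper's answer $J=x_nx_{n-1}\bigl[I(G_3)^{[k]}+x_{n-2}I(G_3)^{[k-1]}+\sum_{j}x_{i_j}I(G_2)^{[k-1]}\bigr]$ --- a formula that is also needed verbatim in the later depth and regularity recursions, so it cannot be glossed over.

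Second, the containment $\partial^*I(G_1)^{[k]}\subseteq I(G_2)^{[k-1]}$ that you use to invoke Lemma \ref{Lemma:EsistenzaVarphi} is also false: in the same example, $x_1x_2x_5x_6/x_1=x_2x_5x_6$ is a generator of $\partial^*I(P_6)^{[2]}$, but $\{2,5\}$ is not an edge of $P_5$, so $x_2x_5x_6\notin I(P_5)^{[1]}$. Your parenthetical ``one checks it can be chosen to avoid vertex $n-1$'' is where this breaks: the deleted edge is forced to be the one containing the deleted variable $x_j$, so if a different edge of the matching meets $n-1$ you cannot land in $G_2$. Consequently the $\Tor$-vanishing of $J\to x_nx_{n-1}I(G_2)^{[k-1]}$ cannot be obtained from the $\partial^*$ criterion alone; the paper instead builds an explicit map $\varphi:\G(J)\rightarrow\G(x_nx_{n-1}I(G_2)^{[k-1]})$ adapted to the three summands of the correct $J$ (using Lemma \ref{Lemma:EsistenzaVarphi} only on the piece $I(G_3)^{[k]}\to I(G_3)^{[k-1]}$) and checks the lcm condition of Theorem \ref{Thm:CriterionBettiSplit} by hand. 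Your verification of the partition itself and your remark on the degenerate case $k=1$ are fine.
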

\begin{proof}
	Firstly, we prove that (\ref{eq:I(G)BettiSplitGForest}) holds. The sum of the right-hand side is evidently a subset of $I(G)^{[k]}$. Hence, we only need to establish the reverse inclusion. Indeed, let $u=e_1\cdots e_k\in \G(I(G)^{[k]})$ where $e_j=x_{p_j}x_{q_j}$ for $j=1,\dots,k$. Then $M=\{\{p_j,q_j\}:j=1,\dots,k\}$ is a $k$-matching of $G$. If $n\in V(M)$, then $\{p_j,q_j\}=\{n-1,n\}$ for some $j$, because $n$ is a leaf. Then $M'=M\setminus\{\{n-1,n\}\}$ is a $(k-1)$-matching of $G_2$. Hence $u\in \G(x_{n}x_{n-1}I(G_2)^{[k-1]})$ in this case. Otherwise, assume that $n\notin V(M)$, then $M$ is a $k$-matching of $G_1$ and $u\in \G(I(G_1)^{[k]})$. These two cases show that (\ref{eq:I(G)BettiSplitGForest}) holds.
	Note that (\ref{eq:I(G)BettiSplitGForest}) is an $x_n$-partition because $\G(I(G)^{[k]})$ is the disjoint union of $\G(I(G_1)^{[k]})$ and $\G(x_{n}x_{n-1}I(G_2)^{[k-1]})$. 
	
	Next, we set $J=I(G_1)^{[k]}\cap x_nx_{n-1}I(G_2)^{[k-1]}$ and claim that
	\begin{equation}\label{eq:intersectBettiSplitGForest}
		J=x_nx_{n-1}\big[I(G_3)^{[k]}+x_{n-2}I(G_3)^{[k-1]}+\sum_{j=1}^{t}x_{i_j}I(G_2)^{[k-1]}\big].
	\end{equation}
	To prove the above equality, we first show that
	\begin{equation}\label{eq:I(G_1)BettiSplit}
		I(G_1)^{[k]}=x_{n-1}x_{n-2}I(G_3)^{[k-1]}+\sum_{j=1}^tx_{n-1}x_{i_j}I(G_2)^{[k-1]}+I(G_2)^{[k]}.
	\end{equation}
	Let $u=e_1\cdots e_k\in \G(I(G_1)^{[k]})$ with $e_j=x_{p_j}x_{q_j}$ for $j=1,\dots,k$. This means that $M=\{\{p_j,q_j\}:j=1,\dots,k\}$ is a $k$-matching of $G_1$. If $n-1\notin V(M)$, none of the vertices from $\{i_1,\dots,i_t\}$ belongs to $V(M)$. Thus $u\in \G(I(G_2)^{[k]})$. Otherwise, if $n-1\in V(M)$, then either $\{n-1,n-2\}\in M$ or $\{n-1,i_j\}\in M$ for some $j\in[t]$. In the first case, $M'=M\setminus\{\{n-1,n-2\}\}$ is a $(k-1)$-matching of $G_3$. Then $u\in \G(x_{n-1}x_{n-2}I(G_3)^{[k-1]})$. In the second case, $M'=M\setminus\{\{n-1,i_j\}\}$ is a $(k-1)$-matching of $G_2$ and $u\in \G(x_{n-1}x_{i_j}I(G_2)^{[k-1]})$. These cases show that equation (\ref{eq:I(G_1)BettiSplit}) holds.
	
	Now, we proceed to prove (\ref{eq:intersectBettiSplitGForest}). We apply equation (\ref{eq:I(G_1)BettiSplit}) to show the followings:\smallskip
	\begin{enumerate}
		\item[(a)] $x_{n-1}x_{n-2}I(G_3)^{[k-1]}\cap x_{n-1}x_nI(G_2)^{[k-1]}=x_nx_{n-1}x_{n-2}I(G_3)^{[k-1]}$;\vspace*{2mm}
		\item[(b)] $x_{n-1}x_{i_j}I(G_2)^{[k-1]}\cap x_{n-1}x_nI(G_2)^{[k-1]}=x_nx_{n-1}x_{i_j}I(G_2)^{[k-1]}$, for $j\in[t]$;\vspace*{2mm}
		\item[(c)] $I(G_2)^{[k]}\cap x_{n-1}x_nI(G_2)^{[k-1]}=x_nx_{n-1}I(G_2)^{[k]}$.
	\end{enumerate}
	Equation (a) follows from the inclusions $x_{n-2}I(G_3)^{[k-1]}\!\subset I(G_3)^{[k-1]}\!\subset\! I(G_2)^{[k-1]}$ and the fact that $x_n$ does not divide any generator of $I(G_3)$ and $I(G_2)$. Equation (b) is justified similarly, given that $x_n$, and $x_{i_j}$ do not divide any generator of $I(G_2)$. Lastly, equation (c) is derived from the containment $I(G_2)^{[k]}\subset I(G_2)^{[k-1]}$ and the fact that none of the generators of $I(G_2)$ are divisible by $x_{n-1}$ and $x_n$.\\
	In light of equations (a), (b) and (c), we deduce equation (\ref{eq:I(G_1)BettiSplit}) by establishing the following:
	\begin{equation}\label{eq:I(G_2)I(G_3)}
		I(G_2)^{[k]}+x_{n-2}I(G_3)^{[k-1]}=I(G_3)^{[k]}+x_{n-2}I(G_3)^{[k-1]}
	\end{equation}
	The inclusion ``$\supseteq$" is clear. For the other inclusion, consider $u=e_1\cdots e_k\in \G(I(G_2)^{[k]})$ such that $e_i\in \G(I(G_2))$ for each $i$. If $x_{n-2}$ divides $u$, then $x_{n-2}$ divides $u_i$ for some $i$. Without loss of generality, we may assume $i=1$. This implies that $x_{n-2}u_2\cdots u_k$ belongs to $\G(x_{n-2}I(G_3)^{[k-1]})$. On the other hand, if $x_{n-2}$ does not divide $u$, then $u$ is an element of $\G(I(G_3)^{[k]})$, as required. Hence, formula (\ref{eq:I(G_2)I(G_3)}) is established.
	(a), (b) and (c) combined with (\ref{eq:I(G_1)BettiSplit}) and (\ref{eq:I(G_2)I(G_3)}) imply the desired formula (\ref{eq:intersectBettiSplitGForest}).\\
	To conclude that (\ref{eq:I(G)BettiSplitGForest}) is indeed a Betti splitting, we must show that the inclusion map $J\rightarrow I(G_1)^{[k]}\oplus x_nx_{n-1}I(G_2)^{[k-1]}$ is $\Tor$-vanishing. Observe that, while $x_n$ does not divide any generator of $I(G_1)^{[k]}$, it divides all generators of $J$ according to (\ref{eq:intersectBettiSplitGForest}). Therefore, arguing as in the proof of Lemma \ref{Lemma:BettiSplit(I,x_n)}, it is enough to show that the inclusion map $J\rightarrow x_nx_{n-1}I(G_2)^{[k-1]}$ is $\Tor$-vanishing.\\
	For this purpose, we define the map
	$$
	\varphi:\G(J)\rightarrow \G(x_nx_{n-1}I(G_2)^{[k-1]})
	$$
	considering the following three cases:
	\begin{enumerate}
		\item[(i)] Let $x_nx_{n-1}u\in \G(x_nx_{n-1}I(G_3)^{[k]})$. By Lemma \ref{Lemma:EsistenzaVarphi}, there exists a map $\widetilde{\varphi}:\G(I(G_3)^{[k]})\rightarrow \G(I(G_3)^{[k-1]})$ verifying Theorem \ref{Thm:CriterionBettiSplit}. Then, we have $\widetilde{\varphi}(u)\in \G(I(G_3)^{[k-1]})\subset \G(I(G_2)^{[k-1]})$ and we set $\varphi(x_nx_{n-1}u)=x_nx_{n-1}\widetilde{\varphi}(u)$.
		\item[(ii)] Let $x_nx_{n-1}x_{n-2}u\in \G(x_nx_{n-1}x_{n-2}I(G_3)^{[k-1]})$. Then, $u\in \G(I(G_2)^{[k-1]})$ and we set $\varphi(x_nx_{n-1}x_{n-2}u)=x_nx_{n-1}u$.
		\item[(iii)] Let $x_nx_{n-1}x_{i_j}u\in \G(x_nx_{n-1}x_{i_j}I(G_2)^{[k-1]})$, for some $j\in[t]$. Then, we set $\varphi(x_nx_{n-1}x_{i_j}u)=x_nx_{n-1}u$.
	\end{enumerate}
	The map $\varphi$ is well--defined by equation (\ref{eq:intersectBettiSplitGForest}). By Theorem \ref{Thm:CriterionBettiSplit}, it suffices to show that for any subset $\Omega\subseteq \G(J)$,
	\begin{equation}\label{eq:ContainOmega}
		\lcm(u:u\in\Omega)\in \mathfrak{m}(\lcm(\varphi(u):u\in\Omega)),
	\end{equation}
	where $\mathfrak{m}=(x_1,\dots,x_n)$ is the maximal ideal of $S$. Let $\Omega\subseteq \G(J)$. Then
	$$
	\Omega=\big(\bigcup_{\ell}\{x_nx_{n-1}u_\ell\}\big)\cup \big(\bigcup_{r}\{x_{n}x_{n-1}x_{n-2}w_r\}\big)\cup \big(\bigcup_{j=1}^{t}\bigcup_{s} \{x_{n}x_{n-1}x_{i_j}z_{j,s}\}\big).
	$$
	where $u_\ell\in \G(I(G_3)^{[k]})$, $w_r\in \G(I(G_3)^{[k-1]})$ and $z_{j,s}\in \G(I(G_2)^{[k-1]})$. 
	
	It follows that 
	\begin{align*}
		\lcm(u:u\in\Omega)&=x_nx_{n-1}\lcm(u_\ell,x_{n-2}w_r,x_{i_j}z_{j,s}:\ell,r,s,\ j\in[t])
	\end{align*}
	and 
	\begin{align*}
		\lcm(\varphi(u):u\in\Omega)=&\ x_nx_{n-1}\lcm(\widetilde{\varphi}(u_\ell),w_r,z_{j,s}:\ell,r,s,\ j\in[t]).
	\end{align*}
	From these expressions, we deduce that $\lcm(u:u\in\Omega)\in \mathfrak{m}(\lcm(\varphi(u):u\in\Omega))$. Specifically, if at least one $w_r$ or at least one $z_{j,s}$ appears in $\Omega$, then either $x_{n-2}$ divides $\lcm(u:u\in\Omega)$ but does not divide $\lcm(\varphi(u):u\in\Omega)$, or $x_{i_j}$ divides $\lcm(u:u\in\Omega)$ but does not divide $\lcm(\varphi(u):u\in\Omega)$. Conversely, if neither $w_r$ nor $z_{j,s}$ appears in $\Omega$, the $\lcm(u:u\in\Omega)$ is strictly divided by $\lcm(\varphi(u):u\in\Omega)$, due to the conditions met by $\widetilde{\varphi}$ as described in Theorem \ref{Thm:CriterionBettiSplit}. This concludes our proof.
\end{proof}

Now, consider equation (\ref{eq:intersectBettiSplitGForest}). Set
\begin{align*}
J_1&=x_{n}x_{n-1}[I(G_3)^{[k]}+x_{n-2}I(G_3)^{[k-1]}],\\
J_2&=x_{n}x_{n-1}\sum_{j=1}^tx_{i_j}I(G_2)^{[k-1]}=x_{n}x_{n-1}(x_{i_1},\dots,x_{i_t})I(G_2)^{[k-1]}.
\end{align*}
With this notation, $J=J_1+J_2$. Note further that $J_2\ne(0)$ if and only if $t>0$.
\begin{lemma}\label{Lemma:BettiSplittingJ=J1+J2}
	Assume Setup \ref{SetupG} and suppose $t>0$. With the notation above, we have
	\begin{equation}\label{eq:intersectIntersect}
		J_1\cap J_2=(x_{i_1},\dots,x_{i_t})J_1
	\end{equation}
	and $J=J_1+J_2$ is a Betti splitting.
\end{lemma}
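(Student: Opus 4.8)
The plan is to first describe the minimal generators of $J_1$, $J_2$ and of $J=J_1+J_2$, then to establish the intersection formula \eqref{eq:intersectIntersect} by a direct membership argument, and finally to verify the Betti splitting through the criterion of Theorem \ref{Thm:CriterionBettiSplit}. The structural fact driving everything is that each $i_j$ is a leaf of $G$ whose unique neighbour is $n-1$, and $n-1\notin V(G_2)=[n-2]$; hence $i_j$ is an isolated vertex of $G_2$ and of $G_3$, so $x_{i_j}$ divides no minimal generator of $I(G_2)^{[\ell]}$ or $I(G_3)^{[\ell]}$, and in particular none of the generators in $\G(J_1)=\{x_nx_{n-1}u:u\in\G(I(G_3)^{[k]})\}\cup\{x_nx_{n-1}x_{n-2}w:w\in\G(I(G_3)^{[k-1]})\}$. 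I will also use repeatedly that $J_1\subseteq x_nx_{n-1}I(G_2)^{[k-1]}$, a consequence of $I(G_3)^{[k]}\subseteq I(G_3)^{[k-1]}\subseteq I(G_2)^{[k-1]}$, together with $\G(I(G_3)^{[k-1]})\subseteq\G(I(G_2)^{[k-1]})$, which holds because $G_3$ is an induced subgraph of $G_2$.

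To prove \eqref{eq:intersectIntersect}, the inclusion ``$\supseteq$'' is immediate: a generator $x_{i_j}g$ of $(x_{i_1},\dots,x_{i_t})J_1$ lies in $J_1$, and writing $g=x_nx_{n-1}h$ with $h\in I(G_2)^{[k-1]}$ shows $x_{i_j}g\in x_nx_{n-1}(x_{i_1},\dots,x_{i_t})I(G_2)^{[k-1]}=J_2$. For ``$\subseteq$'', let $m$ be a monomial of $J_1\cap J_2$. Since $J_2\subseteq(x_{i_1},\dots,x_{i_t})$, some $x_{i_j}$ divides $m$; since $m\in J_1$, some $g\in\G(J_1)$ divides $m$, and as $x_{i_j}\nmid g$ the variable $x_{i_j}$ divides $m/g$. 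Hence $m/x_{i_j}$ is still a multiple of $g$, so $m/x_{i_j}\in J_1$ and $m=x_{i_j}\cdot(m/x_{i_j})\in(x_{i_1},\dots,x_{i_t})J_1$.

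For the Betti splitting, I would first record that $\G(J)$ is the disjoint union of $\G(J_1)$ and $\G(J_2)$ and that no generator of one of these families divides a generator of the other (this follows from the $x_{i_j}$-divisibility above together with a degree count), so that $J=J_1+J_2$ is a partition. By \cite[Proposition 2.1]{FHT2009} it then suffices to show that the inclusion $J_1\cap J_2\to J_1\oplus J_2$ is $\Tor$-vanishing, equivalently that both $J_1\cap J_2\to J_1$ and $J_1\cap J_2\to J_2$ are. By \eqref{eq:intersectIntersect} one has $\G(J_1\cap J_2)=\{x_{i_j}g:j\in[t],\ g\in\G(J_1)\}$, and this is a minimal generating set since $x_{i_j}$ divides no $g\in\G(J_1)$. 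The first map is easy via Theorem \ref{Thm:CriterionBettiSplit} with $\varphi(x_{i_j}g)=g$: for $\emptyset\ne\Omega\subseteq\G(J_1\cap J_2)$, the monomial $\lcm(u:u\in\Omega)$ is $\lcm(\varphi(u):u\in\Omega)$ times the nonempty product of the distinct variables $x_{i_j}$ occurring among the elements of $\Omega$, giving membership in $\mathfrak{m}(\lcm(\varphi(u):u\in\Omega))$. For the second map I would invoke Lemma \ref{Lemma:EsistenzaVarphi} on $\partial^*I(G_3)^{[k]}\subseteq I(G_3)^{[k-1]}$ to obtain $\psi:\G(I(G_3)^{[k]})\to\G(I(G_3)^{[k-1]})$ satisfying the hypothesis of Theorem \ref{Thm:CriterionBettiSplit}, and define $\varphi:\G(J_1\cap J_2)\to\G(J_2)$ by $\varphi(x_nx_{n-1}x_{i_j}u)=x_nx_{n-1}x_{i_j}\psi(u)$ for $u\in\G(I(G_3)^{[k]})$ and $\varphi(x_nx_{n-1}x_{n-2}x_{i_j}w)=x_nx_{n-1}x_{i_j}w$ for $w\in\G(I(G_3)^{[k-1]})$, which is well defined into $\G(J_2)$ since $\G(I(G_3)^{[k-1]})\subseteq\G(I(G_2)^{[k-1]})$.

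The hard part will be checking the lcm condition of Theorem \ref{Thm:CriterionBettiSplit} for this last $\varphi$. Given $\emptyset\ne\Omega\subseteq\G(J_1\cap J_2)$, write it as a union of first-type elements $x_nx_{n-1}x_{i_{a_\ell}}u_\ell$, $u_\ell\in\G(I(G_3)^{[k]})$, and second-type elements $x_nx_{n-1}x_{n-2}x_{i_{b_r}}w_r$, $w_r\in\G(I(G_3)^{[k-1]})$. Since $x_n$, $x_{n-1}$, $x_{n-2}$ and the $x_{i_j}$ do not occur in the $u_\ell$ or $w_r$, one finds $\lcm(u:u\in\Omega)=x_nx_{n-1}\big(\prod_{j}x_{i_j}\big)x_{n-2}^{\,\varepsilon}\lcm(u_\ell,w_r:\ell,r)$ and $\lcm(\varphi(u):u\in\Omega)=x_nx_{n-1}\big(\prod_{j}x_{i_j}\big)\lcm(\psi(u_\ell),w_r:\ell,r)$, where the two products run over the same index set and $\varepsilon=1$ exactly when a second-type element occurs in $\Omega$. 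If $\varepsilon=1$, then $x_{n-2}$ divides $\lcm(u:u\in\Omega)$ but not $\lcm(\varphi(u):u\in\Omega)$, and since $\lcm(\psi(u_\ell):\ell)$ divides $\lcm(u_\ell:\ell)$ one gets $\lcm(\varphi(u):u\in\Omega)\mid\lcm(u:u\in\Omega)/x_{n-2}$, whence $\lcm(u:u\in\Omega)\in\mathfrak{m}(\lcm(\varphi(u):u\in\Omega))$. If $\varepsilon=0$, then $\Omega$ comes from a nonempty $\Omega'\subseteq\G(I(G_3)^{[k]})$ and the defining property of $\psi$ gives $\lcm(\Omega')\in\mathfrak{m}(\lcm(\psi(u):u\in\Omega'))$, and multiplying through by the common factor $x_nx_{n-1}\prod_j x_{i_j}$ gives the same conclusion. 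The point requiring care is precisely this verification: keeping the two families of generators apart and confirming that the extra divisibility coming from $\psi$ (first type) or from $x_{n-2}$ (second type) is never cancelled when forming lcms over an arbitrary $\Omega$; the isolatedness of the $i_j$ in $G_3$ is exactly what makes the $x_{i_j}$-factors pure bookkeeping that does not interfere.
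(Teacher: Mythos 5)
Your proof is correct and follows essentially the same route as the paper: the same generator-level argument for $J_1\cap J_2=(x_{i_1},\dots,x_{i_t})J_1$, and the same map $\varphi$ built from Lemma \ref{Lemma:EsistenzaVarphi} to verify the criterion of Theorem \ref{Thm:CriterionBettiSplit} for $J_1\cap J_2\to J_2$, with the lcm bookkeeping carried out as in Lemma \ref{Lem:I(G)ForestBettiSplit}. The only (harmless) deviation is that you dispose of the component $J_1\cap J_2\to J_1$ by exhibiting an explicit map $x_{i_j}g\mapsto g$ satisfying the lcm criterion, whereas the paper notes that this component vanishes automatically for multidegree reasons, since no $x_{i_j}$ divides a generator of $J_1$.
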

\begin{proof}
	We first show that (\ref{eq:intersectIntersect}) holds. This equality follows from the observation that none of the generators of $J_1$ are divisible by $x_{i_j}$ for any $j\in[t]$ and the following chain of containments:
	$$
	I(G_3)^{[k]}+x_{n-2}I(G_3)^{[k-1]}\subset I(G_2)^{[k]}+x_{n-2}I(G_2)^{[k-1]}\subset I(G_2)^{[k-1]}.
	$$
	It remains to prove that the map $J_1\cap J_2\rightarrow J_1\oplus J_2$ is $\Tor$-vanishing. Note that each generator of $J_1\cap J_2$ is divisible by $x_{i_j}$ for some $j\in[t]$, while none of the generators of $J_1$ are divisible by any of the $x_{i_j}$. Hence, arguing as in Lemma \ref{Lemma:BettiSplit(I,x_n)}, it is enough to show that the inclusion $J_1\cap J_2\rightarrow J_2$ is $\Tor$-vanishing. For this purpose, we apply Theorem \ref{Thm:CriterionBettiSplit} again. Let 
	$$
	\varphi:\G((x_{i_1},\dots,x_{i_t})J_1)\rightarrow \G(J_2)
	$$ 
	be defined considering the following cases:
	\begin{enumerate}
		\item[(i)] Let $x_nx_{n-1}x_{i_j}u\in \G((x_{i_1},\dots,x_{i_t})J_1)$ with $u\in \G(I(G_3)^{[k]})$ and $j\in[t]$. By Lemma \ref{Lemma:EsistenzaVarphi}, there exists $\widetilde{\varphi}:\G(I(G_3)^{[k]})\rightarrow \G(I(G_3)^{[k-1]})$ satisfying Theorem \ref{Thm:CriterionBettiSplit}. Then we set $\varphi(x_nx_{n-1}x_{i_j}u)=x_nx_{n-1}x_{i_j}\widetilde{\varphi}(u)$.
		\item[(ii)] Let $x_nx_{n-1}x_{n-2}x_{i_j}u\in \G((x_{i_1},\dots,x_{i_t})J_1)$ with $u\in \G(I(G_3)^{[k-1]})$, $j\in[t]$. Then $u\in \G(I(G_2)^{[k-1]})$ and we set $\varphi(x_nx_{n-1}x_{n-2}x_{i_j}u)=x_nx_{n-1}x_{i_j}u$.
	\end{enumerate}
	The map $\varphi$ is well defined by (\ref{eq:intersectIntersect}). Arguing as in the proof of Lemma \ref{Lem:I(G)ForestBettiSplit}, we see that $\varphi$ verifies the condition in Theorem \ref{Thm:CriterionBettiSplit}, as desired.
\end{proof}

For the proof of the next corollary, we recall two general facts that we will use repeatedly in what follows. Let $u\in S$ be a monomial and $L\subset S$ be a monomial ideal, then $\depth(S/uL)=\depth(S/L)$. Moreover, if $\{x_{p_1},\dots,x_{p_s}\}$ is a collection of variables not dividing any generator of $L$, then \cite[Corollary 3.2]{HRR}
$$
\depth(S/(x_{p_1},\dots,x_{p_s})L)=\depth(S/L)-(s-1).
$$

The next remark will be crucial for the sequel.

\begin{remark}
	Let $I\subset S$ be a squarefree monomial ideal, where $S=K[x_1,\dots,x_n]$ and consider the ring extension $S'=S[y_1,\dots,y_m]=K[x_1,\dots,x_n,y_1,\dots,y_m]$. Then $\nu(I)=\nu(IS')$. With abuse of notation, we denote again by $I$ the ideal $IS'$, that is, the extension of $I$ in $S'$. Note that $\depth(S'/I)=\depth(S/I)+m$. Let $g_I$, $h_I$ be the normalized depth functions of $I\subset S$, and $I\subset S'$, respectively. Then, $g_I(k)=h_I(k)-m$ for all $1\le k\le\nu(I)$. Thus $g_I$ is non-increasing if and only if $h_I$ is non-increasing. 
\end{remark}

To avoid unnecessary distinctions, we will regard $I(G),I(G_1),I(G_2)$ and $I(G_3)$ as ideals of $S=K[x_1,\dots,x_n]$. Therefore, $g_{I(G_i)}(k)=\depth(S/I(G_i)^{[k]})-(2k-1)$ for all $i$ and $k$.

\begin{corollary}\label{cor:gIRecursiveFormula}
	Assume Setup \ref{SetupG}. 
	\begin{enumerate}
		\item[\em(a)] If $t=0$, then
		\begin{equation}\label{eq:gI(G)TreeRecursive'}
			\begin{aligned}
				g_{I(G)}(k)=\min\{&g_{I(G_1)}(k),g_{I(G_2)}(k-1)-2,\\&g_{I(G_3)}(k-1)-3,g_{I(G_3)}(k)-2\}
			\end{aligned}
		\end{equation}
		for all $1\le k \le \nu(G)$.\medskip
		\item[\em(b)] If $t>0$, then
		\begin{equation}\label{eq:gI(G)TreeRecursive}
			\begin{aligned}
				g_{I(G)}(k)=\min\{&g_{I(G_1)}(k),\,g_{I(G_2)}(k-1)-2-t,\\&g_{I(G_3)}(k-1)-2-t,\,g_{I(G_3)}(k)-1-t\}
			\end{aligned}
		\end{equation}
		for all $1\le k \le \nu(G)$.
	\end{enumerate}
\end{corollary}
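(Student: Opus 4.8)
The plan is to deduce both formulas from the Betti splittings established in Lemmas~\ref{Lem:I(G)ForestBettiSplit} and~\ref{Lemma:BettiSplittingJ=J1+J2}, using the standard fact that if $I = I_1 + I_2$ is a Betti splitting, then
$$
\depth(S/I) = \min\{\depth(S/I_1),\ \depth(S/I_2),\ \depth(S/(I_1\cap I_2)) - 1\},
$$
which follows from (\ref{eq:BettiSplitEq}) via the well-known translation between Betti numbers and depth (projective dimension). The first application is to the splitting (\ref{eq:I(G)BettiSplitGForest}): here $I_1 = I(G_1)^{[k]}$, $I_2 = x_nx_{n-1}I(G_2)^{[k-1]}$, and $I_1\cap I_2 = J$ as computed in (\ref{eq:intersectBettiSplitGForest}). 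Since multiplying by the monomial $x_nx_{n-1}$ does not change depth, $\depth(S/I_2) = \depth(S/I(G_2)^{[k-1]})$. So the only remaining work is to compute $\depth(S/J)$, and then to rewrite everything in terms of the normalized depth functions using $g_{I(G_i)}(k) = \depth(S/I(G_i)^{[k]}) - (2k-1)$ and $d_k = 2k$ for edge ideals of graphs with no isolated edges in the relevant range.

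To compute $\depth(S/J)$ I would split into the two cases of the corollary. When $t = 0$, equation (\ref{eq:intersectBettiSplitGForest}) reduces to $J = x_nx_{n-1}[I(G_3)^{[k]} + x_{n-2}I(G_3)^{[k-1]}]$, which is (up to the harmless monomial factor $x_nx_{n-1}$) precisely the ideal appearing in Lemma~\ref{Lemma:BettiSplit(I,x_n)} applied to $I(G_3)$ with the extra variable $x_{n-2}$ playing the role of $x_n$; hence $(I(G_3),x_{n-2})^{[k]} = I(G_3)^{[k]} + x_{n-2}I(G_3)^{[k-1]}$ is a Betti splitting, and Proposition~\ref{Prop:(I,x)sqfrPowers} gives $\depth$ of the quotient directly — equivalently, applying the depth formula to this inner splitting yields $\depth(S/J)$ as the minimum of $\depth(S/I(G_3)^{[k]})$, $\depth(S/I(G_3)^{[k-1]})$ (the $x_{n-2}I(G_3)^{[k-1]}$ piece), and $\depth(S/I(G_3)^{[k]}) - 1$ (the intersection, which equals $x_{n-2}I(G_3)^{[k]}$), minus one more from being an intersection term in the outer splitting. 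Carefully tracking the shifts $2k-1$ versus $2k-3$ and the monomial factors $x_n x_{n-1}$ (and $x_{n-2}$) produces the four terms in (\ref{eq:gI(G)TreeRecursive'}): $g_{I(G_1)}(k)$ from $I_1$; $g_{I(G_2)}(k-1) - 2$ from $I_2$ (the $-2$ being the degree shift $d_k - d_{k-1} = 2$); and $g_{I(G_3)}(k-1) - 3$ and $g_{I(G_3)}(k) - 2$ from the two contributions inside $\depth(S/J) - 1$.

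When $t > 0$, equation (\ref{eq:intersectBettiSplitGForest}) gives $J = J_1 + J_2$ with $J_1, J_2$ as defined before Lemma~\ref{Lemma:BettiSplittingJ=J1+J2}, and that lemma tells us this is a Betti splitting with $J_1 \cap J_2 = (x_{i_1},\dots,x_{i_t})J_1$. I would apply the depth formula to this splitting: $\depth(S/J_1)$ is again computed as in the $t=0$ case (it is the same ideal $x_nx_{n-1}[I(G_3)^{[k]} + x_{n-2}I(G_3)^{[k-1]}]$); $\depth(S/J_2) = \depth(S/(x_{i_1},\dots,x_{i_t})I(G_2)^{[k-1]}) = \depth(S/I(G_2)^{[k-1]}) - (t-1)$ by the Hochster--Raicu--Rahmati formula \cite[Corollary 3.2]{HRR} quoted just above the corollary, since none of $x_{i_1},\dots,x_{i_t}$ divides a generator of $I(G_2)$; and $\depth(S/(J_1\cap J_2)) = \depth(S/(x_{i_1},\dots,x_{i_t})J_1) = \depth(S/J_1) - (t-1)$ by the same formula. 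Feeding these into the outer depth formula (\ref{eq:I(G)BettiSplitGForest}) and normalizing yields the four terms of (\ref{eq:gI(G)TreeRecursive}): $g_{I(G_1)}(k)$, then $g_{I(G_2)}(k-1) - 2 - (t-1) = g_{I(G_2)}(k-1) - 2 - t$ wait — one must be careful: $\depth(S/J_2)$ enters as a term of $\depth(S/J) - 1$, so it contributes $g_{I(G_2)}(k-1) - 2 - (t-1) - 1 = g_{I(G_2)}(k-1) - 2 - t$; similarly the two $J_1$-contributions inside $\depth(S/J_1) - (t-1) - 1$ give $g_{I(G_3)}(k-1) - 2 - t$ and $g_{I(G_3)}(k) - 1 - t$ after collecting the shifts.

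The main obstacle is purely bookkeeping: keeping track of the three nested sources of degree/depth shifts — the monomial prefactor $x_nx_{n-1}$ (and sometimes $x_{n-2}$) which shifts initial degree but not depth, the normalization $-(2k-1)$ which differs between the index $k$ and index $k-1$ pieces, and the $-(s-1)$ drops from the variable-times-ideal operations, plus the $-1$ penalty each time an ideal appears as the intersection term of a Betti splitting. I would organize this by first writing every $\depth(S/{-})$ in unnormalized form, then substituting $\depth(S/I(G_i)^{[j]}) = g_{I(G_i)}(j) + 2j - 1$ at the very end, and finally checking one small case (say $G$ a path $P_5$ or $P_6$) numerically to confirm no shift has gone astray. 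No genuinely new idea is needed beyond what Lemmas~\ref{Lem:I(G)ForestBettiSplit} and~\ref{Lemma:BettiSplittingJ=J1+J2} and \cite[Corollary 3.2]{HRR} already supply.
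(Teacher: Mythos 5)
Your overall strategy is exactly the paper's: apply the depth formula for Betti splittings to the outer decomposition of Lemma \ref{Lem:I(G)ForestBettiSplit}, then to the inner decompositions of $J$ coming from Lemmas \ref{Lemma:BettiSplit(I,x_n)} and \ref{Lemma:BettiSplittingJ=J1+J2}, use $\depth(S/(x_{p_1},\dots,x_{p_s})L)=\depth(S/L)-(s-1)$, and normalize at the end; your case (a) bookkeeping is correct. In case (b), however, your accounting is internally inconsistent in how the final ``$-1$'' (from $J$ being the intersection term of the \emph{outer} splitting) is distributed. You correctly give the $J_2$-contribution the full chain of shifts: $\depth(S/J_2)$ is a term of $\depth(S/J)$, so it enters the outer minimum as $\depth(S/J_2)-1=\depth(S/I(G_2)^{[k-1]})-(t-1)-1$, normalizing to $g_{I(G_2)}(k-1)-2-t$. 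But for the $J_1$-contribution you stop at $\depth(S/(J_1\cap J_2))-1=\depth(S/J_1)-(t-1)-1=\depth(S/J_1)-t$, forgetting that this is a term of $\depth(S/J)$ and must still drop by $1$ before entering the outer minimum. Carried out consistently, the $J_1$-related term of $\depth(S/I(G)^{[k]})$ is $\depth(S/J_1)-t-1=\min\{\depth(S/I(G_3)^{[k-1]})-t-1,\ \depth(S/I(G_3)^{[k]})-t-2\}$, which normalizes to $\min\{g_{I(G_3)}(k-1)-3-t,\ g_{I(G_3)}(k)-2-t\}$ rather than the $\min\{g_{I(G_3)}(k-1)-2-t,\ g_{I(G_3)}(k)-1-t\}$ of (\ref{eq:gI(G)TreeRecursive}).

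A consistency check makes the point independently of the bookkeeping: the only structural change at $t=0$ is that the $J_2$-piece disappears, so specializing the last two terms of (\ref{eq:gI(G)TreeRecursive}) at $t=0$ must reproduce the last two terms of (\ref{eq:gI(G)TreeRecursive'}); this happens with the constants $-3-t$ and $-2-t$, not with $-2-t$ and $-1-t$. I should add, in fairness, that the paper's own proof of Case 2 contains the identical slip (its displayed formula for $\depth(S/J)-1$ subtracts only one unit, not two, from the intersection term $\depth(S/J_1)-(t-1)$), so you have faithfully reproduced the published argument; but as a derivation it does not close, and what the method actually yields in case (b) differs by one in the two $G_3$-terms from the statement being proved. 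The discrepancy is invisible in the paper's worked example because there the minimum is attained at the $g_{I(G_2)}(k-1)-2-t$ term, and it does not disturb the monotonicity argument of Theorem \ref{Thm:gITreeNonInc}; still, to make your proof (and the corollary) correct you must either insert the missing $-1$ and adjust the stated constants, or supply an extra argument showing the minimum is never attained strictly at the two $G_3$-terms.
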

\begin{proof}
	First, we compute $\depth(S/I(G)^{[k]})$. By Lemma \ref{Lem:I(G)ForestBettiSplit}, the decomposition (\ref{eq:I(G)BettiSplitGForest}) is a Betti splitting. Hence, from \cite[Corollary 2.2]{FHT2009} and \cite[Corollary A.4.3]{JT},
	$$
	\depth(S/I(G)^{[k]})=\min\{\depth(S/I(G_1)^{[k]}),\depth(S/I(G_2)^{[k-1]}),\depth(S/J)-\!1\},
	$$
	where $J$ is given in (\ref{eq:intersectBettiSplitGForest}). Now we distinguish two cases. \medskip\\
	\textsc{Case 1.} Let $t=0$. Then $J=J_1=x_{n}x_{n-1}[I(G_3)^{[k]}+x_{n-2}I(G_3)^{[k-1]}]$ is a Betti splitting by Lemma \ref{Lemma:BettiSplit(I,x_n)}. Since $I(G_3)^{[k]}\cap x_{n-2}I(G_3)^{[k-1]}=x_{n-2}I(G_3)^{[k]}$, we have
	\begin{align*}
		\depth(S/J)&=\min\{\depth(S/I(G_3)^{[k]}),\depth(S/I(G_3)^{[k-1]}),\depth(S/I(G_3)^{[k]})-1\}\\
		&=\min\{\depth(S/I(G_3)^{[k-1]}),\depth(S/I(G_3)^{[k]})-1\}.
	\end{align*}
	We have
	\begin{align*}
		\depth(S/I(G_1)^{[k]})-\!(2k-1)&=g_{I(G_1)}(k),\\
		\depth(S/I(G_2)^{[k-1]})-\!(2k-1)&=\depth(S/I(G_2)^{[k-1]})-(2(k-1)-1)-2
		\\&=g_{I(G_2)}(k-1)-2.
	\end{align*}
	Similar computations yield $\depth(S/I(G_3)^{[k-1]})-(2k-1)=g_{I(G_3)}(k-1)-2$ and $\depth(S/I(G_3)^{[k]})-(2k-1)-1=g_{I(G_3)}(k)-1$. Hence, (\ref{eq:gI(G)TreeRecursive'}) holds in this case.
	\medskip\\
	\textsc{Case 2.} Let $t>0$.
	Now we compute $\depth(S/J)$. By Lemma \ref{Lemma:BettiSplittingJ=J1+J2}, $J=J_1+J_2$ is a Betti splitting and $J_1\cap J_2=(x_{i_1},\dots,x_{i_t})J_1$. Thus,
	\begin{align}\label{eq:case2}
		\nonumber \depth(S/J)-1&=\min\{\depth(S/J_1)-\!1,\!\depth(S/J_2)-\!1,\!\depth(S/J_1)-\!(t-1)-\!1\}\\
		&=\min\{\depth(S/I(G_2)^{[k-1]})-t,\depth(S/J_1)-t\}.
	\end{align}
	Moreover, from \textsc{Case 1}
	$$
	\depth(S/J_1)-t=\min\{\depth(S/I(G_3)^{[k-1]})-t,\depth(S/I(G_3)^{[k]})-t-1\}.
	$$
	Thus, from (\ref{eq:case2}), $\depth(S/I(G)^{[k]})$ is equal to
	\begin{align*}
		&\ \min\{\depth(S/I(G_1)^{[k]}),\depth(S/I(G_2)^{[k-1]}),\depth(S/J)-1\}\\
		=&\ \min\{\depth(S/I(G_1)^{[k]}),\depth(S/I(G_2)^{[k-1]}),\depth(S/I(G_2)^{[k-1]})-t,\\
		&\phantom{\min\{.}\depth(S/I(G_3)^{[k-1]})-t,\depth(S/I(G_3)^{[k]})-t-1\}\\
		=&\ \min\{\depth(S/I(G_1)^{[k]}),\depth(S/I(G_2)^{[k-1]})-t,\depth(S/I(G_3)^{[k-1]})-t,\\
		&\phantom{\min\{.}\depth(S/I(G_3)^{[k]})-t-1\}.
	\end{align*}
	We have $\depth(S/I(G_1)^{[k]})-(2k-1)=g_{I(G_1)}(k)$.	Moreover, 
	\begin{align*}
		\depth(S/I(G_2)^{[k-1]})-(2k-1)-t\ =g_{I(G_2)}(k-1)-2-t.
	\end{align*}
	Similarly,
	\begin{align*}
		\depth(S/I(G_3)^{[k-1]})-(2k-1)-t\ &=\ g_{I(G_3)}(k-1)-2-t,\\
		\depth(S/I(G_3)^{[k]})-(2k-1)-t-1\ &=\ g_{I(G_3)}(k)-1-t.
	\end{align*}
	Finally, we see that equation (\ref{eq:gI(G)TreeRecursive}) holds.
\end{proof}

The next example clarifies Corollary \ref{cor:gIRecursiveFormula}
\begin{example}\rm 
	Consider the graph $G$ on eleven vertices depicted below:
	\begin{figure}[H]
		\centering
		\begin{tikzpicture}
			\filldraw (2,0) circle (2pt) node[below]{$9$};
			\filldraw (-0.9,0) circle (2pt) node[below]{$2$};
			\filldraw (-2.6,0) circle (2pt) node[below]{$1$};
			\filldraw (0.6,0) circle (2pt) node[below]{$3$};
			\filldraw (0.6,1.2) circle (2pt) node[above]{$4$};
			\filldraw (3.2,0) circle (2pt) node[below]{$10$};
			\filldraw (4.5,0) circle (2pt) node[below]{$11$};
			\filldraw (2.6,1.2) circle (2pt) node[above]{$5$};
			\filldraw (3.2,1.4) circle (2pt) node[above]{$6$};
			\filldraw (3.8,1.3) circle (2pt) node[above]{$7$};
			\filldraw (4.3,0.95) circle (2pt) node[above]{$8$};
			\draw[-] (-2.6,0)--(3.2,0);
			\draw[-] (4.5,0)--(3.2,0);
			\draw[-] (3.2,0)--(2.6,1.2);
			\draw[-] (3.2,0)--(3.2,1.4);
			\draw[-] (3.2,0)--(3.8,1.3);
			\draw[-] (3.2,0)--(4.3,0.95);
			\draw[-] (0.6,0) -- (0.6,1.2);
			\filldraw (-2.5,1.8) node[below]{$G$};
		\end{tikzpicture}
	\end{figure}
	Notice that $\nu(G)=3$ and $\{9,10\}$ is a distance edge of $G$. Using the notation in Setup \ref{SetupG}, with $n=11$, we have $N_G(10)=\{i_1,\dots,i_t,n-2,n\}=\{5,6,7,8,9,11\}$, with $t=4>0$. The graphs $G_1$, $G_2$, $G_3$ are depicted below:
	\begin{figure}[H]
		\centering
		\begin{tikzpicture}[scale=0.6]
	    	\filldraw (2,0) circle (2pt) node[below]{$9$};
	    	\filldraw (-0.9,0) circle (2pt) node[below]{$2$};
	    	\filldraw (-2.6,0) circle (2pt) node[below]{$1$};
	    	\filldraw (0.6,0) circle (2pt) node[below]{$3$};
	    	\filldraw (0.6,1.2) circle (2pt) node[above]{$4$};
	    	\filldraw (3.2,0) circle (2pt) node[below]{$10$};
	    	\filldraw (2.6,1.2) circle (2pt) node[above]{$5$};
	    	\filldraw (3.2,1.4) circle (2pt) node[above]{$6$};
	    	\filldraw (3.8,1.3) circle (2pt) node[above]{$7$};
	    	\filldraw (4.3,0.95) circle (2pt) node[above]{$8$};
	    	\draw[-] (-2.6,0)--(3.2,0);
	    	\draw[-] (3.2,0)--(2.6,1.2);
	    	\draw[-] (3.2,0)--(3.2,1.4);
	    	\draw[-] (3.2,0)--(3.8,1.3);
	    	\draw[-] (3.2,0)--(4.3,0.95);
	    	\draw[-] (0.6,0) -- (0.6,1.2);
	    	\filldraw (-1.5,1.8) node[below]{$G_1$};
	    \end{tikzpicture}
	    \hfill
		\begin{tikzpicture}[scale=0.6]
    		\filldraw (2,0) circle (2pt) node[below]{$9$};
    		\filldraw (-0.9,0) circle (2pt) node[below]{$2$};
    		\filldraw (-2.6,0) circle (2pt) node[below]{$1$};
    		\filldraw (0.6,0) circle (2pt) node[below]{$3$};
    		\filldraw (0.6,1.2) circle (2pt) node[above]{$4$};
    		\draw[-] (-2.6,0)--(2,0);
    		\draw[-] (0.6,0) -- (0.6,1.2);
    		\filldraw (-1.5,1.8) node[below]{$G_2$};
    	\end{tikzpicture}
    	\hfill
        \begin{tikzpicture}[scale=0.6]
        	\filldraw (-0.9,0) circle (2pt) node[below]{$2$};
        	\filldraw (-2.6,0) circle (2pt) node[below]{$1$};
        	\filldraw (0.6,0) circle (2pt) node[below]{$3$};
        	\filldraw (0.6,1.2) circle (2pt) node[above]{$4$};
        	\draw[-] (-2.6,0)--(0.6,0);
        	\draw[-] (0.6,0) -- (0.6,1.2);
        	\filldraw (-1.5,1.8) node[below]{$G_3$};
        \end{tikzpicture}
	\end{figure}
	In the previous picture we have not drawn the isolated vertices in $G_2$ and $G_3$.
	     
	Let $k=2$. According to Corollary \ref{cor:gIRecursiveFormula}(b), since $t=4$, we have
	\begin{align*}
	g_{I(G)}(2)\ &=\ \min\{g_{I(G_1)}(2),\,g_{I(G_2)}(1)-2-t,g_{I(G_3)}(1)-2-t,\,g_{I(G_3)}(2)-1-t\}\\
	 	    &=\ \min\{g_{I(G_1)}(2),\,g_{I(G_2)}(1)-6,g_{I(G_3)}(1)-6,\,g_{I(G_3)}(2)-5\}.
	\end{align*}
	    
	Next, by using \textit{Macaulay2} \cite{GDS, FPack2} 
	$g_{I(G_1)}(2)=3$, $g_{I(G_2)}(2)=7$, $g_{I(G_3)}(1)=8$ and $g_{I(G_3)}(2)=7$. Thus,
	$$
	g_{I(G)}(2)\ =\ \min\{3,\,1,\,2,\,2\}\ =\ 1.\\
	$$
\end{example}

Now we are ready to prove Theorem \ref{Thm:gITreeNonInc}.

\begin{proof}[Proof of Theorem \ref{Thm:gITreeNonInc}]
	Let $G_1,\dots,G_c$ be the connected components of $G$. Each $G_i$ is a tree with at least two vertices. Let $m=\max_{i}|V(G_i)|$. If $m=2$, then up to a relabeling, $I(G)=(x_1x_2,x_3x_4,\dots,x_{n-1}x_n)$ is a complete intersection. Setting $y_i=x_{2i-1}x_{2i}$, $i=1,\dots,\frac{n}{2}$, and $L=(y_1,\dots,y_{\frac{n}{2}})$, we have $\nu(G)=\nu(L)=\frac{n}{2}$ and $\pd(I(G)^{[k]})=\pd(L^{[k]})$ for all $k$. Since $L^{[k]}$ is a squarefree Veronese ideal generated in degree $k$, then $\pd(L^{[k]})=\frac{n}{2}-k$ for $k=1,\dots,\nu(L)$. Thus, for all $k=1,\dots,\frac{n}{2}$, from the Auslander--Buchsbaum formula, we have
	\begin{align*}
		g_{I(G)}(k)=\depth(S/I(G)^{[k]})-(2k-1)=n-1-\pd(I^{[k]})-(2k-1)=\frac{n}{2}-k.
	\end{align*}
	Hence, $g_{I(G)}(k)$ is non-increasing in this a case.
	
	Now suppose $m>2$. Then $G$ has a distant leaf. We proceed by induction on $n=|V(G)|$. If $\nu(G)=1$ there is nothing to prove. Similarly if $\nu(G)=2$, then $g_{I(G)}(2)=0$ \cite[Corollary 3.5]{EHHM2022b} and $g_{I(G)}(1)\ge0$. Thus we may suppose that $\nu(G)\ge3$. Assume Setup \ref{SetupG}. Let $k<\nu(G)$. Suppose $t=0$. Then, Corollary \ref{cor:gIRecursiveFormula} gives
	\begin{align}
		\label{eq:gIrec1}g_{I(G)}(k+1)&=\min\{g_{I(G_1)}(k+1),g_{I(G_2)}(k)-2,g_{I(G_3)}(k)-3,g_{I(G_3)}(k+1)-2\},\\
		\label{eq:gIrec2}g_{I(G)}(k)&=\min\{g_{I(G_1)}(k),g_{I(G_2)}(k-1)-2,g_{I(G_3)}(k-1)-3,g_{I(G_3)}(k)-2\}.
	\end{align}
	It remains to prove that $g_{I(G)}(k+1)-g_{I(G)}(k)\le0$. If $k+1=\nu(G)$, then $g_{I(G)}(k+1)=0$ by \cite[Corollary 3.5]{EHHM2022b} and there is nothing to prove.
	
	Suppose now $k+1<\nu(G)$. Then, we have $k\le\nu(G)-2$. It is easily seen that $\nu(G_3)\le\nu(G_2)\le\nu(G_1)\le\nu(G)$ and $\nu(G)=\nu(G_3)+1$. Thus $k\le\nu(G_3)-1$, and this inequality guarantees that all terms $g_{I(G_i)}(\ell)$ appearing in the minimum taken in (\ref{eq:gIrec1}) and in the minimum taken in (\ref{eq:gIrec2}) have $\ell\le\nu(G_i)$ for $i=1,2,3$.
	
	Now, to end the proof, it is enough to distinguish the four cases arising from (\ref{eq:gIrec2}). Suppose $g_{I(G)}(k)=g_{I(G_1)}(k)$. Then, from (\ref{eq:gIrec1}),
	we have $g_{I(G)}(k+1)\le g_{I(G_1)}(k+1)$ and
	\begin{align*}
		g_{I(G)}(k+1)-g_{I(G)}(k)\ &\le\ g_{I(G_1)}(k+1)-g_{I(G_1)}(k)\le0,
	\end{align*}
	where the last inequality follows by the inductive hypothesis, since $|V(G_1)|<n$. For the other three cases, one can argue similarly. The case $t>0$ is analogous.
\end{proof}

Lemma \ref{Lem:I(G)ForestBettiSplit} together with Lemma \ref{Lemma:BettiSplittingJ=J1+J2} and a simple inductive argument imply the following interesting consequence.

\begin{corollary}
	Let $G$ be a forest. Then, the graded Betti numbers of the squarefree powers $I(G)^{[k]}$ do not depend upon the characteristic of the field $K$.
\end{corollary}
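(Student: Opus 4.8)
The plan is to prove the statement by strong induction on $n = |V(G)|$, mirroring the structure of the proof of Theorem \ref{Thm:gITreeNonInc}. The key input is that, by a theorem of Francisco, H\`a and Van Tuyl, whenever $I = I_1 + I_2$ is a Betti splitting, the graded Betti numbers of $I$ are completely determined by those of $I_1$, $I_2$ and $I_1 \cap I_2$ via the formula \eqref{eq:BettiSplitEq}. So it suffices to keep resolving the squarefree powers into pieces whose Betti numbers are visibly characteristic-independent.

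First I would dispose of the base cases: if $\nu(G) = 1$, then $I(G)^{[1]} = I(G)$ is an edge ideal and it is classical (e.g.\ via the Taylor complex or Lyubeznik resolution, or directly from the structure of forests) that the Betti numbers of edge ideals of forests are characteristic-free; more generally, if every connected component of $G$ has at most two vertices, then $I(G)^{[k]}$ is (up to renaming) a squarefree Veronese ideal, whose minimal free resolution is the Eagon--Northcott-type complex and hence is characteristic-independent. Also, for any $k$ the ideal $I(G)^{[k]}$ might be a principal ideal or zero, which is trivial.

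For the inductive step, assume $G$ has a component with at least three vertices, so by Proposition \ref{Prop:DistantLeaf} it has a distant leaf, and we may put ourselves in Setup \ref{SetupG} (the case $\nu(G) \le 2$ being reducible to the base cases as in the proof of Theorem \ref{Thm:gITreeNonInc}). By Lemma \ref{Lem:I(G)ForestBettiSplit}, $I(G)^{[k]} = I(G_1)^{[k]} + x_nx_{n-1}I(G_2)^{[k-1]}$ is a Betti splitting, so by \eqref{eq:BettiSplitEq} the Betti numbers of $I(G)^{[k]}$ are determined by those of $I(G_1)^{[k]}$, of $x_nx_{n-1}I(G_2)^{[k-1]}$, and of the intersection $J$ described in \eqref{eq:intersectBettiSplitGForest}. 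Since $|V(G_1)|, |V(G_2)| < n$ and $\beta_{i,j}(uL) = \beta_{i,j-\deg u}(L)$ for a monomial $u$, the induction hypothesis (applied to the forests $G_1$ and $G_2$, for all squarefree powers) handles the first two summands. For the intersection $J$: when $t = 0$, Lemma \ref{Lemma:BettiSplit(I,x_n)} shows $J = x_nx_{n-1}[I(G_3)^{[k]} + x_{n-2}I(G_3)^{[k-1]}]$ is itself a Betti splitting, and the intersection of those two pieces is $x_{n-2}I(G_3)^{[k]}$; when $t > 0$, Lemma \ref{Lemma:BettiSplittingJ=J1+J2} gives $J = J_1 + J_2$ as a Betti splitting with $J_1 \cap J_2 = (x_{i_1},\dots,x_{i_t})J_1$, and $J_1$ splits further exactly as in the $t=0$ case. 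In every instance the pieces are monomial multiples of $I(G_3)^{[\ell]}$ for $\ell \in \{k-1,k\}$, possibly further multiplied by an ideal generated by a subset of the variables; since $|V(G_3)| < n$, induction applies to $I(G_3)$, and multiplying by $(x_{i_1},\dots,x_{i_t})$ only tensors the resolution with a Koszul complex, which is characteristic-free. Unwinding \eqref{eq:BettiSplitEq} through this finite tree of Betti splittings expresses every $\beta_{i,j}(I(G)^{[k]})$ as a finite $\ZZ$-linear combination of shifted Betti numbers of $I(G_1)^{[\bullet]}$, $I(G_2)^{[\bullet]}$ and $I(G_3)^{[\bullet]}$, all of which are characteristic-free by induction.

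The main subtlety, rather than an obstacle, is bookkeeping: one must verify that at each stage the ``further multiplication by a subset of the variables'' does not spoil characteristic-independence, which is immediate because if $L$ has a characteristic-free minimal free resolution and $x_{p_1},\dots,x_{p_s}$ do not divide any generator of $L$, then $\beta_{i,j}((x_{p_1},\dots,x_{p_s})L)$ is computed from the Koszul complex on $s$ variables tensored with the resolution of $L$ (equivalently via the Betti-splitting formula again, using $(x_{p_1},\dots,x_{p_s})L = x_{p_1}L + (x_{p_2},\dots,x_{p_s})L$ and induction on $s$, each step being an $x$-splitting as in Lemma \ref{Lemma:BettiSplit(I,x_n)}), and the Koszul complex is defined over $\ZZ$. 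Once this is in place, the induction closes and the corollary follows.
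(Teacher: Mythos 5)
Your argument is essentially the paper's own: the paper proves this corollary by exactly the observation that Lemmas \ref{Lem:I(G)ForestBettiSplit} and \ref{Lemma:BettiSplittingJ=J1+J2} (together with Lemma \ref{Lemma:BettiSplit(I,x_n)} for the splitting of $J_1$) reduce every $\beta_{i,j}(I(G)^{[k]})$, via the additivity formula \eqref{eq:BettiSplitEq}, to shifted Betti numbers of squarefree powers of the smaller forests $G_1,G_2,G_3$, and then inducts; your bookkeeping of the intermediate pieces and of multiplication by a variable ideal is correct. The only loose end is the base of the induction: your listed base cases (edge ideals of forests, components of size at most two, principal ideals) do not cover a forest with $\nu(G)=2$ having a component on at least three vertices, where $I(G)^{[2]}$ still needs to be handled since Setup \ref{SetupG} requires $\nu(G)\ge 3$; this is easily repaired by noting that the top squarefree power $I(G)^{[\nu(G)]}$ has linear quotients by \cite[Theorem 5.1]{BHZN18}, so its Betti numbers are determined combinatorially and are characteristic-free.
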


We end the section with a general upper bound for the normalized depth function of the edge ideal of any graph $G$ in terms of the longest induced path of $G$.

We denote by $P_n$ the \textit{path on $n$ vertices}, that is the graph with $V(P_n)=[n]$ and $E(P_n)=\{\{1,2\},\{2,3\},\dots,\{n-1,n\}\}$. It is well known that $\nu(P_n)=\lfloor\frac{n}{2}\rfloor$.

\begin{theorem}\label{thm:path}
	We have
	$$
	g_{I(P_n)}(k)\ =\ \begin{cases}
		\lceil\frac{n}{3}\rceil-k&\text{if}\ k=1,\dots,\lceil\frac{n}{3}\rceil,\\
		\hfil0& otherwise.
	\end{cases}
	$$
\end{theorem}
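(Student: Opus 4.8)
I would prove the formula by induction on $n$, using the distant-leaf Betti splitting machinery of Corollary \ref{cor:gIRecursiveFormula}. Observe that in $P_n$ the vertex $n$ is a distant leaf with distant edge $\{n-1,n\}$, and in Setup \ref{SetupG} we have $t=0$ (the only non-leaf neighbour of $n-1$ is $n-2$), with $G_1=P_{n-1}$, $G_2=P_{n-2}$, $G_3=P_{n-3}$. Hence formula (\ref{eq:gI(G)TreeRecursive'}) specializes to
\[
g_{I(P_n)}(k)=\min\{g_{I(P_{n-1})}(k),\,g_{I(P_{n-2})}(k-1)-2,\,g_{I(P_{n-3})}(k-1)-3,\,g_{I(P_{n-3})}(k)-2\}
\]
for $3\le\nu(P_n)$, i.e.\ for $n\ge6$. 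The small cases $n\le5$ (and the edge cases $k=1$, $\nu(P_n)-1$, $\nu(P_n)$, where $g=0$ by \cite[Corollary 3.5]{EHHM2022b}) would be checked directly: for instance $P_2,P_3$ are a single edge and a $2$-path with $\nu=1$ so $g(1)=1=\lceil n/3\rceil-1$; $P_4,P_5$ have $\nu=2$ and one computes $g(1)=1$, $g(2)=0$, matching $\lceil 4/3\rceil=\lceil 5/3\rceil=2$.

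\textbf{The inductive step.} Writing $f(n,k)=\lceil n/3\rceil-k$ for $1\le k\le\lceil n/3\rceil$ and $f(n,k)=0$ otherwise, and setting $m=\lceil n/3\rceil$, I would substitute the inductive formulas for $g_{I(P_{n-1})}$, $g_{I(P_{n-2})}$, $g_{I(P_{n-3})}$ into the recursion and show the minimum equals $f(n,k)$. The key numerical facts are $\lceil (n-3)/3\rceil=m-1$, and $\lceil(n-1)/3\rceil,\lceil(n-2)/3\rceil\in\{m-1,m\}$ depending on $n\bmod 3$. The cleanest of the four terms is $g_{I(P_{n-3})}(k-1)-3$: when $2\le k\le m$ this is $(m-1-(k-1))-3=m-k-3$... no, wait — one must be careful, since for $P_{n-3}$ the function is $\lceil(n-3)/3\rceil-(k-1)=m-k$ on its support, giving $m-k-3$, which is \emph{smaller} than $f(n,k)=m-k$; this forces one to check that this term is never the active minimum, i.e.\ that it is actually $0$ because $k-1$ exceeds $\nu(P_{n-3})$ or $\lceil(n-3)/3\rceil$ in the relevant range, OR that the whole expression is clamped. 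Here is where I expect the real work: one must track carefully, for each residue of $n$ mod $3$ and each $k$, which of the four terms lies in the ``support'' part of its piecewise definition and which has collapsed to $0$, and verify the minimum is exactly $m-k$ (when $k\le m$) or $0$ (when $k>m$). The term $g_{I(P_{n-3})}(k)-2$ on its support gives $(m-1)-k-2 = m-k-3$ as well — again too small unless it is clamped to $0$; so the bulk of the argument is showing these ``$-2$'' and ``$-3$'' shifted copies of $g_{I(P_{n-3})}$ have already hit zero on the range where they would otherwise dominate, while the term $g_{I(P_{n-2})}(k-1)-2$ (which on support is $\lceil(n-2)/3\rceil-(k-1)-2=\lceil(n-2)/3\rceil-k-1$) and $g_{I(P_{n-1})}(k)=\lceil(n-1)/3\rceil-k$ conspire to produce the value $m-k$.

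\textbf{The main obstacle.} The genuine difficulty is not conceptual but combinatorial bookkeeping: the recursion mixes $k$ and $k-1$ arguments of \emph{three} different paths whose ceiling-values jump at different places, and naively each of the four terms is $\le f(n,k)$, several of them strictly. So to get equality I must show the small terms are only small because they have saturated at $0$ (outside the support of the piecewise formula), which requires a precise comparison of $k$ against $\lceil(n-3)/3\rceil$, $\nu(P_{n-3})=\lfloor(n-3)/2\rfloor$, and the analogous bounds. The safest route is to split into the three cases $n\equiv 0,1,2\pmod 3$, and within each, into the sub-ranges $k\le m-1$, $k=m$, $k>m$, and verify the identity in each box — tedious but mechanical. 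An alternative, possibly cleaner, route would be to prove the two inequalities separately: ``$\le$'' is immediate from the recursion by picking the term $g_{I(P_{n-1})}(k)$ (plus induction and monotonicity from Theorem \ref{Thm:gITreeNonInc}), and for ``$\ge$'' one needs a lower bound $\depth(S/I(P_n)^{[k]})\ge \lceil n/3\rceil-k+(2k-1)$, which could conceivably be extracted from a direct colon-ideal / Betti-splitting depth estimate rather than from the exact recursion; but absent such an estimate in the excerpt, I would carry out the case analysis above.
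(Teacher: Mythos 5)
Your skeleton coincides with the paper's: induct on $n$, note that $n$ is a distant leaf of $P_n$ with $t=0$, $G_1=P_{n-1}$, $G_2=P_{n-2}$, $G_3=P_{n-3}$, apply the recursion of Corollary \ref{cor:gIRecursiveFormula}(a), and check $n\le 5$ by hand. However, the ``main obstacle'' around which you organize all the work is not a real obstacle: it is an artifact of misreading the normalization in Corollary \ref{cor:gIRecursiveFormula}, and the fixes you propose for it cannot succeed. In that corollary every $I(G_i)$ is regarded as an ideal of the \emph{full} ring $S=K[x_1,\dots,x_n]$, so $g_{I(G_i)}(k)=\depth(S/I(G_i)^{[k]})-(2k-1)$ exceeds the intrinsic normalized depth function of $P_{n-i}$ (the one the theorem computes) by exactly $i$, the number of missing variables --- this is precisely the content of the Remark preceding the corollary. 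Hence on its support the term $g_{I(P_{n-3})}(k-1)-3$ equals $\bigl(\lceil\tfrac{n-3}{3}\rceil-(k-1)+3\bigr)-3=\lceil\tfrac{n}{3}\rceil-k$, not $\lceil\tfrac{n}{3}\rceil-k-3$ as you compute; likewise $g_{I(P_{n-3})}(k)-2=\lceil\tfrac{n}{3}\rceil-k$, $g_{I(P_{n-1})}(k)=\lceil\tfrac{n-1}{3}\rceil-k+1$ and $g_{I(P_{n-2})}(k-1)-2=\lceil\tfrac{n-2}{3}\rceil-k+1$. All four terms are $\ge\lceil\tfrac{n}{3}\rceil-k$ and the last two $P_{n-3}$-terms equal it, so the minimum drops out at once --- no residue analysis mod $3$, no sub-ranges of $k$, no ``clamping''.

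Under your intrinsic reading the identity you are trying to verify is simply false, so the contingency plans you sketch cannot rescue it: for $k=1$ and $n$ large, $g_{I(P_{n-3})}(1)-2$ would be $\lceil\tfrac{n}{3}\rceil-4$, which is well inside the support of $P_{n-3}$ and strictly below the target value $\lceil\tfrac{n}{3}\rceil-1$; checking whether ``$k-1$ exceeds $\nu(P_{n-3})$ or $\lceil\tfrac{n-3}{3}\rceil$'' does nothing here, and the recursion is an exact equality of depths, so nothing is clamped. This is the one genuine gap; once the $+1/+2/+3$ ambient-ring correction is inserted, your induction closes exactly as in the paper (the range $k\ge\lceil\tfrac{n}{3}\rceil$ is handled by noting $k-1\ge\lceil\tfrac{n-3}{3}\rceil$, so the corrected term $g_{I(P_{n-3})}(k-1)-3$ vanishes). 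Two minor slips besides: your base-case values for $P_2,P_3$ are wrong ($g_{I(P_3)}(1)=\lceil\tfrac{3}{3}\rceil-1=0$, not $1$, consistent with $\depth(S/I(P_n))=\lceil\tfrac{n}{3}\rceil$), and your claimed ``$\le$'' direction via the single term $g_{I(P_{n-1})}(k)$ does not work under the corrected normalization when $3\mid n$, since that term is then $\lceil\tfrac{n}{3}\rceil-k+1$.
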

\begin{proof}
	We proceed by induction on $n\ge3$. For $n=3,4,5$ one can easily verify the above formula. Let $n\ge6$, then $\nu(G)\ge3$ and 
	$n$ is a distant leaf with distant edge $\{n-1,n\}$.  Using the notation in Setup \ref{SetupG}, we have $t=0$, $G_1=P_{n-1}$, $G_2=P_{n-2}$ and $G_3=P_{n-3}$. Hence, by Corollary \ref{cor:gIRecursiveFormula},
	$$
	g_{I(P_n)}(k)=\min\{g_{I(P_{n-1})}(k),g_{I(P_{n-2})}(k-1)\!-\!2,g_{I(P_{n-3})}(k-1)\!-\!3,g_{I(P_{n-3})}(k)\!-\!2\}.
	$$
	
	If $k\ge\lceil\frac{n}{3}\rceil$, then $k-1\ge\lceil\frac{n-3}{3}\rceil$. By the inductive hypothesis $g_{I(P_{n-3})}(k-1)-3=0$, and by the above equation $g_{I(P_n)}(k)=0$, too. In this case, the assertion follows.
	
	Suppose now $1\le k<\lceil\frac{n}{3}\rceil$, then all terms appearing in the minimum taken above are greater than zero, by the inductive hypothesis. Hence,
	\begin{align*}
		g_{I(P_n)}(k)&=\min\Big\{\Big\lceil\frac{n-1}{3}\Big\rceil-k+1,\Big\lceil\frac{n-2}{3}\Big\rceil-(k-1),\Big\lceil\frac{n-3}{3}\Big\rceil-(k-1),\\
		&\phantom{=\min\Big\{.}\Big\lceil\frac{n-3}{3}\Big\rceil-k+1\Big\}\\
		&=\min\Big\{\Big\lceil\frac{n-1}{3}\Big\rceil-k+1,\Big\lceil\frac{n-2}{3}\Big\rceil-k+1,\Big\lceil\frac{n}{3}\Big\rceil-k\Big\}\\
		&=\Big\lceil\frac{n}{3}\Big\rceil-k,
	\end{align*}
	as desired. The inductive proof is complete.
\end{proof}

Denote by $\ell(G)$ the number of vertices of the longest induced path of $G$.

\begin{corollary}
	Let $G$ be a connected graph with $n$ vertices. Then $\nu(G)\ge\lfloor\frac{\ell(G)}{2}\rfloor$ and
	$$
	g_{I(G)}(k)\le\begin{cases}
		\big\lceil\frac{3n-2\ell(G)}{3}\big\rceil-k&\text{for}\ k=1,\dots,\big\lceil\frac{\ell(G)}{3}\big\rceil,\\
		\hfil n-\ell(G)&\text{for}\ k=\big\lceil\frac{\ell(G)}{3}\big\rceil+1,\dots,\big\lfloor\frac{\ell(G)}{2}\big\rfloor.
	\end{cases}
	$$
\end{corollary}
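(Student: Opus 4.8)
The plan is to realize the longest induced path of $G$ as an induced subgraph and transport the computation of Theorem~\ref{thm:path}. Write $\ell=\ell(G)$; if $\ell\le 1$ then $G$ has no edges and the statement is vacuous, so assume $\ell\ge 2$. Let $P$ be an induced path of $G$ on $\ell$ vertices, and relabel the vertices of $G$ so that $V(P)=\{1,\dots,\ell\}$; then $P=G_{\{1,\dots,\ell\}}$ is an induced subgraph. Since every matching of $P$ is a matching of $G$, we get immediately $\nu(G)\ge\nu(P_\ell)=\lfloor\ell/2\rfloor$, which is the first assertion; note also that then $\lceil\ell/3\rceil\le\lfloor\ell/2\rfloor\le\nu(G)$, so every $k$ occurring in the statement lies in the domain of $g_{I(G)}$. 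The heart of the matter will be the inequality
\[
g_{I(G)}(k)\ \le\ (n-\ell)+g_{I(P_\ell)}(k)\qquad\text{for }1\le k\le\lfloor\ell/2\rfloor,
\]
where on the right $g_{I(P_\ell)}$ is the normalized depth function computed in $K[x_1,\dots,x_\ell]$ as in Theorem~\ref{thm:path}. Granting this, I would substitute $g_{I(P_\ell)}(k)=\lceil\ell/3\rceil-k$ for $1\le k\le\lceil\ell/3\rceil$ and $g_{I(P_\ell)}(k)=0$ for $\lceil\ell/3\rceil<k\le\lfloor\ell/2\rfloor$, and invoke the elementary identity $(n-\ell)+\lceil\ell/3\rceil=n-\lfloor 2\ell/3\rfloor=\lceil(3n-2\ell)/3\rceil$ to read off exactly the two cases claimed.

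To prove the displayed inequality I would pass to Stanley--Reisner complexes, which is legitimate because $I(G)^{[k]}$ and $I(P_\ell)^{[k]}$ are both squarefree. Let $\Delta$ be the Stanley--Reisner complex of $I(G)^{[k]}\subset S=K[x_1,\dots,x_n]$: a subset $F\subseteq[n]$ is a face of $\Delta$ precisely when $F$ contains $V(M)$ for no $k$-matching $M$ of $G$. Because $P=G_{\{1,\dots,\ell\}}$ is an \emph{induced} subgraph, a $k$-matching of $G$ whose vertices all lie in $\{1,\dots,\ell\}$ is already a $k$-matching of $P$; hence the induced subcomplex $\Delta_{\{1,\dots,\ell\}}$ is exactly the Stanley--Reisner complex of $I(P_\ell)^{[k]}\subset S'=K[x_1,\dots,x_\ell]$. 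Now Hochster's formula expresses each $\beta_{i,j}$ as a sum of dimensions of reduced homology groups of induced subcomplexes on $j$-element vertex sets, and for $W\subseteq\{1,\dots,\ell\}$ one has $(\Delta_{\{1,\dots,\ell\}})_W=\Delta_W$; therefore $\beta_{i,j}(S'/I(P_\ell)^{[k]})\le\beta_{i,j}(S/I(G)^{[k]})$ for all $i,j$, whence $\pd_{S'}(S'/I(P_\ell)^{[k]})\le\pd_S(S/I(G)^{[k]})$. Applying the Auslander--Buchsbaum formula on each side gives $\depth_S(S/I(G)^{[k]})=n-\pd_S(S/I(G)^{[k]})\le n-\pd_{S'}(S'/I(P_\ell)^{[k]})=(n-\ell)+\depth_{S'}(S'/I(P_\ell)^{[k]})$, and subtracting $2k-1$ from both ends yields the displayed inequality (here one uses that $\textup{indeg}(I(H)^{[k]})=2k$ for every edge ideal, so the normalizations on the two sides agree).

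I do not expect a genuine obstacle: the argument is just a combination of Theorem~\ref{thm:path}, the standard fact that projective dimension cannot increase on passing to an induced subcomplex, and Auslander--Buchsbaum. The points that require a little care are (i) verifying that the induced subcomplex of $\Delta$ on $V(P)$ is precisely the Stanley--Reisner complex of $I(P_\ell)^{[k]}$ --- this is exactly where the hypothesis that $P$ is \emph{induced} enters, and without it the generators of $I(P)^{[k]}$ supported on $V(P)$ need not be all $k$-matchings of $G$ inside $V(P)$; (ii) keeping the two ambient polynomial rings $S$ and $S'$ straight when invoking Auslander--Buchsbaum and the definition of $g$; and (iii) the floor/ceiling bookkeeping at the end, which reduces to the identity $\ell-\lceil\ell/3\rceil=\lfloor 2\ell/3\rfloor$.
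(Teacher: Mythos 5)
Your proof is correct and follows essentially the same route as the paper: restrict to the longest induced path $P$, use $\nu(G)\ge\nu(P)=\lfloor\ell(G)/2\rfloor$, and combine the depth comparison $\depth(S/I(G)^{[k]})\le\depth(S_P/I(P)^{[k]})+n-\ell(G)$ with Theorem \ref{thm:path}. The only difference is that the paper obtains this restriction inequality by citing \cite[Corollary 1.3]{EHHM2022a}, whereas you reprove it from scratch via Hochster's formula on the induced subcomplex and Auslander--Buchsbaum; that argument, and your floor/ceiling identity $(n-\ell)+\lceil\ell/3\rceil=\lceil(3n-2\ell)/3\rceil$, are both correct.
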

\begin{proof}
	Let $P$ be an induced path of $G$ with $|V(P)|=\ell(G)$, and set $S_P=K[x_v:v\in V(P)]$. Then $\nu(G)\ge\nu(P)=\lfloor\frac{\ell(G)}{2}\rfloor$. Let $k\le\lfloor\frac{\ell(G)}{2}\rfloor$. By \cite[Corollary 1.3]{EHHM2022a}, we have $\depth(S/I(G)^{[k]})\le\depth(S/I(P)^{[k]})=\depth(S_P/I(P)^{[k]})+n-\ell(G)$. Hence, the assertion follows from Theorem \ref{thm:path}.
\end{proof}

Our experiments using \textit{Macaulay2} \cite{GDS} suggest that $g_{I(P_n)}(k)=g_{I(C_n)}(k)$,  for all $2\le k\le\lfloor\frac{n}{2}\rfloor$, where $C_n$ is the cycle on $n$ vertices.

\begin{question}\label{que:cycle}
	Let $C_n$ denote the cycle on $n$ vertices. That is, $V(C_n)=[n]$ and $E(C_n)=E(P_n)\cup\{\{1,n\}\}$. Then $\nu(C_n)=\nu(P_n)$. It is well known that $\depth(S/I(P_n))=\lceil\frac{n}{3}\rceil$ and $\depth(S/I(C_n))=\lceil\frac{n-1}{3}\rceil$, thus $g_{I(P_n)}(1)$ and $g_{I(C_n)}(1)$ differ at most by one. Is it true that $$g_{I(C_n)}(k)=g_{I(P_n)}(k)$$ for all $2\le k\le\lfloor\frac{n}{2}\rfloor$?
\end{question}

\section{The Erey--Hibi Conjecture}
In this last section, we turn to the regularity of the squarefree powers of $I(G)$, when $G$ is a forest, and we prove a conjecture due to Erey and Hibi \cite{EH2021}.\smallskip

In \cite{EH2021}, Erey and Hibi introduced the concept of \textit{$k$-admissible matching} of a graph. Let $n$ and $k$ be positive integers. A sequence $(a_1,\dots,a_n)$ of positive integers is called a \textit{$k$-admissible sequence} if $a_1+\dots+a_n\le n+k-1$.

\begin{definition}\label{Def:kAdmMat}
	Let $G$ be a graph, $M$ be a matching of $G$ and $k\in[\nu(G)]$. We say that $M$ is a \textit{$k$-admissible matching} if there exists a sequence $M_1,\dots,M_r$ of non--empty subsets of $M$ satisfying the following conditions:
	\begin{enumerate}
		\item[(a)] $M=M_1\cup\dots\cup M_r$;
		\item[(b)] $M_i\cap M_j=\emptyset$ for all $i\ne j$;
		\item[(c)] for all $i\ne j$, if $e_i\in M_i$ and $e_j\in M_j$, then $\{e_i,e_j\}$ is a \textit{gap} in $G$, that is, $\{e_i,e_j\}$ is an induced matching of size $2$;
		\item[(d)] the sequence $(|M_1|,\dots,|M_r|)$ is $k$-admissible;
		\item[(e)] the induced subgraph of $G$ on $V(M_i)$ is a forest, for all $i\in[r]$.
	\end{enumerate}
	
	In such a case, we say that $M=M_1\cup\dots\cup M_r$ is a \textit{$k$-admissible partition} of $G$. The \textit{$k$-admissible matching number} of $G$, denoted by $\aim(G,k)$, is the number
	$$
	\aim(G,k)=\max\{|M|:M\ \textit{is a}\ k\textit{-admissible matching of}\ G\}
	$$
	for $k\in[\nu(G)]$. We set $\aim(G,k)=0$, if $G$ has no $k$-admissible matching.
\end{definition}

If $G$ is a forest, condition (e) is vacuously verified.
\begin{remark}\label{Rem:aim}(\cite[Remark 14]{EH2021})
	Note that $\aim(G,k)\ge k$ for all $k\in[\nu(G)]$. If $H$ is an induced subgraph of $G$, then $\aim(H,k)\le\aim(G,k)$. Moreover,
	$$
	\indm(G)=\aim(G,1)\le\aim(G,2)\le\dots\le\aim(G,\nu(G))=\nu(G).
	$$
\end{remark}

For the proof of the next corollary, we recall some facts. Let $u\in S$ be a monomial and $L\subset S$ be a monomial ideal, then $\reg(uL)=\reg(L)+\deg(u)$. Moreover, if $\{x_{p_1},\dots,x_{p_s}\}$ is a collection of variables not dividing any generator of $L$, then \cite[Corollary 3.2]{HRR} $\reg((x_{p_1},\dots,x_{p_s})L)=\reg(L)+1$. Furthermore, if $H$ is an induced subgraph of $G$, \cite[Corollary 1.3]{EHHM2022a} gives
$$
\reg(I(H)^{[k]})\le\reg(I(G)^{[k]}).
$$

\begin{corollary}\label{Cor:regRecursForest}
	With the notation and assumptions of Lemma \ref{Lem:I(G)ForestBettiSplit}, we have
	$$
	\reg(I(G)^{[k]})=\max\{\reg(I(G_1)^{[k]}),\reg(I(G_2)^{[k-1]})+2,\reg(I(G_3)^{[k]})+1\}
	$$
	for $k=1, \ldots, \nu(G)$.
\end{corollary}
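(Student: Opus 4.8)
The plan is to mimic the depth computation in Corollary~\ref{cor:gIRecursiveFormula}, replacing the depth formula for Betti splittings with the corresponding regularity formula. Recall that if $I=I_1+I_2$ is a Betti splitting, then $\reg(I)=\max\{\reg(I_1),\reg(I_2),\reg(I_1\cap I_2)+1\}$ (this follows from \cite[Corollary 2.2]{FHT2009}, or can be extracted from the mapping cone / long exact sequence argument for Betti numbers). Applying this to the Betti splitting (\ref{eq:I(G)BettiSplitGForest}) from Lemma~\ref{Lem:I(G)ForestBettiSplit}, I get
$$
\reg(I(G)^{[k]})=\max\{\reg(I(G_1)^{[k]}),\ \reg(x_nx_{n-1}I(G_2)^{[k-1]}),\ \reg(J)+1\},
$$
where $J$ is the intersection ideal described in (\ref{eq:intersectBettiSplitGForest}). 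Using $\reg(uL)=\reg(L)+\deg(u)$, the middle term is $\reg(I(G_2)^{[k-1]})+2$. So the whole problem reduces to computing $\reg(J)$ and checking that $\reg(J)+1$ does not exceed the maximum of the other two terms—equivalently, showing the claimed formula, in which $J$ contributes only through the term $\reg(I(G_3)^{[k]})+1$.

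Next I would compute $\reg(J)$ by the same two-case split ($t=0$ versus $t>0$) used in the proof of Corollary~\ref{cor:gIRecursiveFormula}. When $t=0$, $J=x_nx_{n-1}[I(G_3)^{[k]}+x_{n-2}I(G_3)^{[k-1]}]$ is a Betti splitting by Lemma~\ref{Lemma:BettiSplit(I,x_n)}, with intersection $x_{n-2}I(G_3)^{[k]}$, so
$$
\reg(J)=2+\max\{\reg(I(G_3)^{[k]}),\ \reg(I(G_3)^{[k-1]})+1,\ \reg(I(G_3)^{[k]})+1+1\},
$$
which is $2+\reg(I(G_3)^{[k]})+2=\reg(I(G_3)^{[k]})+4$? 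That would be too big, so here is where I must be careful: the regularity of the intersection $x_{n-2}I(G_3)^{[k]}$ inside a Betti splitting contributes $\reg+1$, giving $\reg(I(G_3)^{[k]})+1$ to the inner max, while the summand $I(G_3)^{[k]}$ itself contributes $\reg(I(G_3)^{[k]})$; thus the inner max is $\max\{\reg(I(G_3)^{[k-1]}),\reg(I(G_3)^{[k]})+1\}$, wait—I should recompute cleanly, and the key point is that I will need the elementary inequality $\reg(I(G_3)^{[k-1]})\le\reg(I(G_3)^{[k]})+1$ or rather the monotonicity $\reg(I^{[k-1]})\le\reg(I^{[k]})$ for squarefree powers of edge ideals (which holds for forests; it can be deduced inductively, or from the present recursion together with Remark~\ref{Rem:aim}-type bounds, or cited). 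Granting such a monotonicity/comparison, $\reg(J)$ collapses to $\reg(I(G_3)^{[k]})+2$ in the $t=0$ case, whence $\reg(J)+1=\reg(I(G_3)^{[k]})+3$—still seemingly too large, so in fact the precise bookkeeping must show that $\reg(J)=\reg(I(G_3)^{[k]})+1$, i.e. that the ``$+1$'' from the outer intersection step is absorbed because $J$ is itself the intersection and the statement already accounts for one $+1$. This delicate tracking of the additive constants is the heart of the argument.

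For $t>0$, the analogous computation uses Lemma~\ref{Lemma:BettiSplittingJ=J1+J2}: $J=J_1+J_2$ is a Betti splitting with $J_1\cap J_2=(x_{i_1},\dots,x_{i_t})J_1$, so $\reg(J)=\max\{\reg(J_1),\reg(J_2),\reg((x_{i_1},\dots,x_{i_t})J_1)+1\}=\max\{\reg(J_1),\reg(J_2),\reg(J_1)+2\}=\reg(J_1)+2$ by the cited fact $\reg((x_{p_1},\dots,x_{p_s})L)=\reg(L)+1$; and $\reg(J_2)=2+1+\reg(I(G_2)^{[k-1]})=\reg(I(G_2)^{[k-1]})+3$, while $\reg(J_1)$ is handled as in the $t=0$ case. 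Then I would feed everything back into the top-level max, convert the $I(G_2)$ and $I(G_3)$ terms using $\reg(I(H)^{[j]})=\reg(I(H)^{[j]})$ (no normalization needed here, unlike depth), and simplify. The final simplification requires the comparison inequalities $\reg(I(G_3)^{[k-1]})\le\reg(I(G_3)^{[k]})+1$ and $\reg(I(G_2)^{[k-1]})\le\reg(I(G_3)^{[k]})+1$ together with the inclusion-induced bound $\reg(I(G_3)^{[k]})\le\reg(I(G_2)^{[k]})\le\reg(I(G_1)^{[k]})$ from \cite[Corollary 1.3]{EHHM2022a}, to see that the surviving terms are exactly $\reg(I(G_1)^{[k]})$, $\reg(I(G_2)^{[k-1]})+2$, and $\reg(I(G_3)^{[k]})+1$. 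The main obstacle, as indicated above, is getting every additive constant right when passing through the nested Betti splittings—in particular correctly accounting for the single ``$+1$'' that a Betti splitting contributes via its intersection term versus the shifts $\deg(u)$ coming from the monomial prefactors $x_nx_{n-1}$, $x_{n-2}$, $x_{i_j}$—and supplying (or citing) the monotonicity $\reg(I(H)^{[k-1]})\le\reg(I(H)^{[k]})+1$ for forests that is needed to collapse the maxima.
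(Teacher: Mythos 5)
There is a genuine gap, and it is located exactly where you yourself sensed trouble: you have the Betti-splitting regularity formula with the wrong sign. For a Betti splitting $I=I_1+I_2$ of \emph{ideals}, the splitting identity $\beta_{i,j}(I)=\beta_{i,j}(I_1)+\beta_{i,j}(I_2)+\beta_{i-1,j}(I_1\cap I_2)$ shifts the homological index of the intersection \emph{up} by one, so a nonzero $\beta_{i-1,j}(I_1\cap I_2)$ contributes $j-i=(j-(i-1))-1\le \reg(I_1\cap I_2)-1$ to $\reg(I)$. Hence \cite[Corollary 2.2]{FHT2009} reads $\reg(I)=\max\{\reg(I_1),\reg(I_2),\reg(I_1\cap I_2)-1\}$, not $\reg(I_1\cap I_2)+1$ as you wrote. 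With the correct sign everything closes immediately: the top level gives $\reg(I(G)^{[k]})=\max\{\reg(I(G_1)^{[k]}),\reg(I(G_2)^{[k-1]})+2,\reg(J)-1\}$; the inner splitting of $J_1=x_nx_{n-1}[I(G_3)^{[k]}+x_{n-2}I(G_3)^{[k-1]}]$ gives $\reg(J_1)=\max\{\reg(I(G_3)^{[k]})+2,\reg(I(G_3)^{[k-1]})+3\}$ (the intersection term $\reg(x_nx_{n-1}x_{n-2}I(G_3)^{[k]})-1$ being absorbed); for $t>0$ one adds $\reg(J_2)=\reg(I(G_2)^{[k-1]})+3$ and notes $\reg((x_{i_1},\dots,x_{i_t})J_1)-1=\reg(J_1)$; and the only comparison needed to simplify is the induced-subgraph inequality $\reg(I(G_3)^{[k-1]})\le\reg(I(G_2)^{[k-1]})$ from \cite[Corollary 1.3]{EHHM2022a}.

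Because of the sign error your arithmetic never terminates: you repeatedly obtain values that are ``too big,'' and you try to rescue the computation by postulating that a $+1$ is ``absorbed'' and by invoking a monotonicity statement $\reg(I(H)^{[k-1]})\le\reg(I(H)^{[k]})$ (or $+1$) for forests. That monotonicity is neither proved in your argument nor needed for the corollary, and deducing it from $\reg(I(G)^{[k]})=\aim(G,k)+k$ would be circular, since that identity is the theorem this corollary is meant to establish. Your overall strategy --- feed the nested Betti splittings of Lemmas \ref{Lem:I(G)ForestBettiSplit}, \ref{Lemma:BettiSplit(I,x_n)} and \ref{Lemma:BettiSplittingJ=J1+J2} into the regularity formula and use $\reg(uL)=\reg(L)+\deg(u)$ and $\reg((x_{p_1},\dots,x_{p_s})L)=\reg(L)+1$ --- is exactly the paper's, so once you replace $\reg(I_1\cap I_2)+1$ by $\reg(I_1\cap I_2)-1$ and drop the unneeded monotonicity, the proof goes through.
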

\begin{proof}
	By Lemma \ref{Lem:I(G)ForestBettiSplit}, since $I(G)^{[k]}$ is a Betti splitting, from \cite[Corollary 2.2]{FHT2009} we have
	\begin{equation}\label{eq:reg(I(G)^{[k]})}
		\begin{aligned}
			\reg(I(G)^{[k]})&=\max\{\reg(I(G_1)^{[k]}),\reg(x_nx_{n-1}I(G_2)^{[k-1]}),\reg(J)-1\}\\
			&=\max\{\reg(I(G_1)^{[k]}),\reg(I(G_2)^{[k-1]})+2,\reg(J)-1\},
		\end{aligned}
	\end{equation}
	where $J$ is given in (\ref{eq:intersectBettiSplitGForest}). Since $\reg((x_{i_1},\dots,x_{i_t})J_1)-1=\reg(J_1)$, Lemma \ref{Lemma:BettiSplittingJ=J1+J2} gives
	\begin{align*}
		\reg(J)&=\max\{\reg(J_1),\reg(J_2),\reg((x_{i_1},\dots,x_{i_t})J_1)-1\}\\
		&=\max\{\reg(J_1),\reg(J_2)\},
	\end{align*}
	
	Since $J_1=x_{n}x_{n-1}[I(G_3)^{[k]}+x_{n-2}I(G_3)^{[k-1]}]$ is a Betti splitting, by Lemma \ref{Lemma:BettiSplit(I,x_n)} we have
	\begin{equation}\label{eq:reg(J_1)}
		\begin{aligned}
			\reg(J_1)&=\max\{\reg(x_{n}x_{n-1}I(G_3)^{[k]}),\reg(x_nx_{n-1}x_{n-2}I(G_3)^{[k-1]}),\\
			&\phantom{=\max\{}\reg(x_nx_{n-1}x_{n-2}I(G_3)^{[k]})-1\}\\
			&=\max\{\reg(I(G_3)^{[k]})+2,\reg(I(G_3)^{[k-1]})+3\}.
		\end{aligned}
	\end{equation}
	
	Now we distinguish two cases. \medskip\\
	\textsc{Case 1.} Suppose $t>0$, then $J_2=x_nx_{n-1}(x_{i_1},\dots,x_{i_t})I(G_2)^{[k-1]}\ne(0)$, and we have $\reg(J_2)=\reg(I(G_2)^{[k-1]})+3$. Since $\reg(I(G_3)^{[k-1]})\le\reg(I(G_2)^{[k-1]})$,
	\begin{equation}\label{eq:reg(J)1}
		\begin{aligned}
			\reg(J)&=\max\{\reg(I(G_3)^{[k]})+2,\reg(I(G_3)^{[k-1]})+3,\reg(I(G_2)^{[k-1]})+3\}\\
			&=\max\{\reg(I(G_3)^{[k]})+2,\reg(I(G_2)^{[k-1]})+3\}.
		\end{aligned}
	\end{equation}
	Hence, combining (\ref{eq:reg(I(G)^{[k]})}) with (\ref{eq:reg(J)1}), the assertion follows.\smallskip\\
	\textsc{Case 2.} Suppose $t=0$, then $J_2=(0)$ and $J=J_1$. Combining (\ref{eq:reg(J_1)}) with (\ref{eq:reg(I(G)^{[k]})}), the assertion follows because $\reg(I(G_3)^{[k-1]})\le\reg(I(G_2)^{[k-1]})$.
\end{proof}

We need the following lemmata proved in \cite{EH2021}. 
\begin{lemma}{\rm \cite[Lemma 15]{EH2021}}\label{Lemma:aimEreyHibiStessoG}
	Let $G$ be a forest and let $2\le k\le\nu(G)$. Then
	$$
	\aim(G,k)\le\aim(G,k-1)+1.
	$$
\end{lemma}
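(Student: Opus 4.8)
The plan is to prove the inequality $\aim(G,k) \le \aim(G,k-1) + 1$ by starting with an optimal $k$-admissible matching and modifying its admissible partition to produce a $(k-1)$-admissible matching that is only one edge smaller. First I would take a matching $M$ of $G$ with $|M| = \aim(G,k)$ together with a $k$-admissible partition $M = M_1 \cup \dots \cup M_r$, so that the sequence $(|M_1|,\dots,|M_r|)$ satisfies $\sum_{i=1}^r |M_i| \le r + k - 1$. The idea is that the ``budget'' $k-1$ for a $(k-1)$-admissible sequence is exactly one less, so after deleting a single suitably chosen edge from $M$, the resulting partition of $M \setminus \{e\}$ should satisfy the $(k-1)$-admissible condition $\sum |M_i'| \le r' + (k-1) - 1$.

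The key case analysis is on the block sizes. If some block $M_i$ has $|M_i| \ge 2$, then removing one edge $e$ from that block yields blocks $M_1,\dots,M_i\setminus\{e\},\dots,M_r$; the number of blocks is unchanged (the modified block is still non-empty), the total size drops by one, conditions (a),(b),(c),(e) are inherited since we only shrank a block and took induced subgraphs of forests, and the new sequence sums to at most $r + k - 2 = r + (k-1) - 1$, so it is $(k-1)$-admissible. Hence $\aim(G,k-1) \ge |M| - 1 = \aim(G,k) - 1$ in this case. The remaining case is that every block is a singleton, $|M_i| = 1$ for all $i$; then $|M| = r$ and the $k$-admissibility $\sum |M_i| = r \le r + k - 1$ is automatic, but more importantly the pairwise gap condition (c) now says that $M$ itself is an induced matching, so $|M| \le \indm(G) = \aim(G,1) \le \aim(G,k-1)$ (using Remark \ref{Rem:aim}), which gives the even stronger bound $\aim(G,k) \le \aim(G,k-1)$, a fortiori the claim.

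I expect the main technical point to be verifying that in the singleton case one genuinely recovers an induced matching and that $\indm(G) \le \aim(G,k-1)$; this follows directly from the monotonicity in Remark \ref{Rem:aim}, so it is not a real obstacle, merely something to state carefully. A subtlety to watch is the degenerate possibility $r = 1$ with $|M_1| \ge 2$: deleting an edge still leaves a non-empty single block, and a one-term sequence $(|M_1| - 1)$ with $|M_1| - 1 \le k - 1 = 1 + (k-1) - 1$ is $(k-1)$-admissible, so nothing breaks. Another point is whether $M \setminus \{e\}$ could become empty, which only happens if $|M| = 1$, i.e. $k = 1$; but the hypothesis is $k \ge 2$ and $\aim(G,k) \ge k \ge 2$, so $|M| \ge 2$ and the difference $|M| - 1 \ge 1$ is still the size of a genuine matching. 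In short, the whole argument is a short counting-and-bookkeeping proof; the one genuinely content-bearing step is choosing which edge to delete, and the block-size dichotomy resolves that cleanly.
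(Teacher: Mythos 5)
Your proof is correct. The paper does not actually prove this lemma---it is quoted from \cite[Lemma 15]{EH2021}---and your block-size dichotomy is essentially the argument one finds there: if some block of the $k$-admissible partition has at least two edges, deleting one edge from it drops the total to at most $r+(k-1)-1$ while preserving conditions (a)--(e), so the truncated matching is $(k-1)$-admissible. The only place you take a slightly longer route is the all-singletons case, where you pass through ``$M$ is an induced matching, hence $|M|\le\indm(G)=\aim(G,1)\le\aim(G,k-1)$''; one can instead observe directly that $|M|=r\le r+(k-1)-1$ because $k\ge 2$, so $M$ with its original partition is already $(k-1)$-admissible---but your detour is equally valid.
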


\begin{lemma}{\rm \cite[Lemma 23]{EH2021}}\label{Lemma:aimEreyHibi}
	Let $G$ be a forest and let $\{w,v\}$ be a distant edge of $G$. Then, for all $2\le k\le\nu(G)$,
	$$
	\aim(G\setminus\{w,v\},k-1)+1\le\aim(G,k).
	$$
\end{lemma}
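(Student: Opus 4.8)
The plan is to enlarge a maximum $(k-1)$-admissible matching of $H:=G\setminus\{w,v\}$ by the distant edge $\{w,v\}$ and check that the result is a $k$-admissible matching of $G$. Concretely, I would fix a $(k-1)$-admissible matching $N$ of $H$ with $|N|=\aim(H,k-1)$ together with a $(k-1)$-admissible partition $N=N_1\cup\dots\cup N_r$ as in Definition~\ref{Def:kAdmMat}; if no such $N$ exists then $\aim(H,k-1)=0$ and the inequality is trivial since $\aim(G,k)\ge k\ge 2$ by Remark~\ref{Rem:aim}, so I may assume $r\ge 1$. The structural point to record is that, by the distant-edge hypothesis, among the neighbours of $w$ at most one, say $u$, is a non-leaf, while all the others --- including $v$ --- are leaves of $G$ and hence become isolated vertices of $H$ once $w$ is deleted. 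Consequently $u$ (if it exists) is the \emph{only} vertex of $H$ adjacent in $G$ to a vertex of $\{w,v\}$; in particular $\{w,v\}$ is vertex-disjoint from every edge of $N$, so $M:=N\cup\{\{w,v\}\}$ is a matching of $G$ of size $|N|+1$.

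To finish, I would produce a $k$-admissible partition of $M$, distinguishing whether $u$ is covered by $N$. If no edge of $N$ contains $u$ (in particular if $u$ does not exist), I take the partition $N_1,\dots,N_r,\{\{w,v\}\}$: condition (c) for the new singleton block holds because the only vertex of $H$ adjacent in $G$ to $\{w,v\}$, namely $u$, does not occur in $N$, so $\{\{w,v\},e'\}$ is a gap for every $e'\in\bigcup_j N_j$; and the size sequence $(|N_1|,\dots,|N_r|,1)$ has sum $|N|+1\le(r+k-2)+1=r+k-1$, so it is even $(k-1)$-admissible. If instead some edge $e^*\in N$ contains $u$ --- unique, since $N$ is a matching --- with $e^*\in N_i$, I enlarge that block and take the partition $N_1,\dots,N_{i-1},N_i\cup\{\{w,v\}\},N_{i+1},\dots,N_r$: for $j\ne i$ no edge of $N_j$ meets $u$, so $\{\{w,v\},e'\}$ is a gap for every $e'\in N_j$ and (c) is preserved; the size sequence has length $r$ and sum $|N|+1\le(r+k-2)+1=r+k-1$, hence is $k$-admissible. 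In both cases (a) and (b) are immediate, (e) is vacuous because $G$ is a forest, and $|M|=\aim(H,k-1)+1$, which gives $\aim(G,k)\ge\aim(H,k-1)+1$.

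The only step that really needs care is the structural observation isolating $u$ as the unique possible obstruction: this is precisely where the hypothesis that $\{w,v\}$ is a \emph{distant} edge is used, and it is what makes the two-case analysis exhaustive and ensures that adjoining $\{w,v\}$ perturbs at most one block of the given partition. Everything else is bookkeeping with the inequality $a_1+\dots+a_n\le n+k-1$ defining a $k$-admissible sequence.
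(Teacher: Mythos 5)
Your proof is correct. Note that the paper does not prove this lemma at all --- it is quoted from Erey--Hibi \cite[Lemma 23]{EH2021} --- so there is no in-paper argument to compare against; but your construction (adjoin the distant edge $\{w,v\}$ to an optimal $(k-1)$-admissible matching of $G\setminus\{w,v\}$, either as a new singleton block or merged into the unique block containing the one possible non-leaf neighbour $u$ of $w$) is the natural one, and it is the same idea the authors use inline in the proof of Theorem \ref{Thm:ConjEreyHibi} when they show $\aim(G,k)\ge\aim(G_3,k)+1$; there the vertex $n-2$ has also been deleted, so no analogue of your vertex $u$ survives and the singleton-block case suffices, whereas your two-case analysis is exactly what is needed for the present statement where $u$ may still carry an edge of the matching. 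The key structural point --- that the only edges of $G\setminus\{w,v\}$ failing to form a gap with $\{w,v\}$ are those through $u$, because all other neighbours of $w$ are leaves and hence isolated after deletion --- is identified and used correctly, the admissibility count $|N|+1\le r+k-1$ is right in both cases, and the degenerate case $\aim(G\setminus\{w,v\},k-1)=0$ is properly dispatched via Remark \ref{Rem:aim}.
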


We are in the position to prove Erey and Hibi conjecture \cite[Conjecture 31]{EH2021}.
\begin{theorem}\label{Thm:ConjEreyHibi}
	Let $G$ be a forest. Then
	$$
	\reg(I(G)^{[k]})=\aim(G,k)+k, \ \ \ \text{for}\ k=1,\dots,\nu(G).
	$$
\end{theorem}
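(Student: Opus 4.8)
The plan is to prove the identity $\reg(I(G)^{[k]}) = \aim(G,k)+k$ by induction on $n = |V(G)|$, using the recursive formula for regularity from Corollary~\ref{Cor:regRecursForest} in tandem with the recursive behaviour of $\aim$ encoded in Lemmas~\ref{Lemma:aimEreyHibiStessoG} and~\ref{Lemma:aimEreyHibi}. First I would dispatch the base cases and degenerate situations: when $\nu(G)\le 2$, or more generally when $G$ is a disjoint union of edges, the ideal $I(G)^{[k]}$ is a squarefree Veronese-type object whose regularity is known, and one checks directly that it matches $\aim(G,k)+k$; also, adding isolated vertices changes neither side, so one may assume every connected component has at least two vertices and, after passing to a component realizing the maximum, reduce to a tree with $\nu(G)\ge 3$. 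For the general inductive step I would adopt Setup~\ref{SetupG}, fix a distant leaf $n$ with distant edge $\{n-1,n\}$ and the induced subgraphs $G_1=G_{[n-1]}$, $G_2=G_{[n-2]}$, $G_3=G_{[n-3]}$, each on strictly fewer vertices.

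Next I would invoke Corollary~\ref{Cor:regRecursForest}, which gives
$$
\reg(I(G)^{[k]}) = \max\{\reg(I(G_1)^{[k]}),\ \reg(I(G_2)^{[k-1]})+2,\ \reg(I(G_3)^{[k]})+1\}.
$$
Applying the inductive hypothesis to $G_1$, $G_2$, $G_3$ turns the right-hand side into
$$
\max\{\aim(G_1,k)+k,\ \aim(G_2,k-1)+k+1,\ \aim(G_3,k)+k+1\},
$$
so the theorem reduces to the purely combinatorial identity
$$
\aim(G,k) = \max\{\aim(G_1,k),\ \aim(G_2,k-1)+1,\ \aim(G_3,k)+1\}.
$$
Here one must be mildly careful about the ranges: since $\nu(G)=\nu(G_3)+1$ and $\nu(G_3)\le\nu(G_2)\le\nu(G_1)\le\nu(G)$, the relevant indices stay in the admissible ranges of the subgraphs for $k<\nu(G)$, while the top power $k=\nu(G)$ (where $\aim(G,\nu(G))=\nu(G)$ and $\reg=2\nu(G)$ by the known maximal-power result) is handled separately.

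The bulk of the work is this combinatorial identity, and I would prove the two inequalities separately. For ``$\ge$'': any $k$-admissible matching of $G_1$ is one of $G$ (since $G_1$ is induced), giving $\aim(G,k)\ge\aim(G_1,k)$; the term $\aim(G_2,k-1)+1$ is exactly Lemma~\ref{Lemma:aimEreyHibi} applied to the distant edge $\{n-1,n\}$ since $G\setminus\{n-1,n\}=G_2$; and the term $\aim(G_3,k)+1$ comes from taking a $k$-admissible partition of $G_3$ and adjoining the edge $\{i_1,n-1\}$ (or $\{n-1,n\}$ if $t=0$) as a new block $M_{r+1}=\{e\}$ — one checks $\{e,e'\}$ is a gap for every $e'$ in the $G_3$-matching because $n-1$ and $n$ are not vertices of $G_3$ and the new block has size $1$, which keeps the sequence $k$-admissible while increasing $|M|$ by one. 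For ``$\le$'': given a $k$-admissible matching $M$ of $G$ with partition $M=M_1\cup\dots\cup M_r$, argue according to whether any edge of $M$ meets $\{n-1,n\}$. If none does, $M$ lives in $G_1$. If some $M_i$ contains an edge $e_0$ at $n$ or $n-1$, the gap condition forces every edge of $M$ in the \emph{other} blocks to avoid $N_G(n-1)\cup\{n-1,n\}$, so removing block $M_i$ yields a matching of $G_3$ with an appropriate admissible partition; a short case analysis on $|M_i|$ — combined with Lemma~\ref{Lemma:aimEreyHibiStessoG} to adjust the admissibility parameter — bounds $|M|$ by $\aim(G_3,k)+1$ when $|M_i|=1$ and by $\aim(G_2,k-1)+1$ when $|M_i|\ge 2$ (after noting $e_0$ can be replaced by $\{n-1,n\}$ and the remaining edges of $M_i$ together with the other blocks form a matching of $G_2$). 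The main obstacle I expect is precisely this last bookkeeping: tracking how the admissible sequence $(|M_1|,\dots,|M_r|)$ transforms when a block is deleted or shrunk, and making sure the ``gap with everything else'' hypothesis genuinely confines the surviving edges to $G_2$ or $G_3$ rather than merely to $G_1$; the case $t=0$ versus $t>0$ and the edge cases at $k=1$ and $k=\nu(G)$ require separate but routine attention.
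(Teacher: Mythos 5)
Your proposal is correct and follows essentially the same route as the paper: the same base cases, the same induction on $|V(G)|$ via Setup \ref{SetupG} and Corollary \ref{Cor:regRecursForest}, the same reduction to the identity $\aim(G,k)=\max\{\aim(G_1,k),\aim(G_2,k-1)+1,\aim(G_3,k)+1\}$, and the same two-sided combinatorial argument (your case split on $|M_i|=1$ versus $|M_i|\ge2$ is exactly the paper's ``$M_1$ is a singleton, else $\aim(G_2,k-1)+1\ge|M|$'' dichotomy, just phrased directly rather than by contradiction). The only cosmetic deviations are your optional use of $\{i_1,n-1\}$ instead of $\{n-1,n\}$ for the $\aim(G_3,k)+1$ lower bound and your explicit flagging of the $k=\nu(G)$ boundary case, neither of which changes the argument.
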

\begin{proof}
	As in the proof of Theorem \ref{Thm:gITreeNonInc}, let $G_1,\dots,G_c$ be the connected components of $G$, and let $m=\max_{i}|V(G_i)|$.
	
	If $m=2$, up to a relabeling, $I(G)=(x_1x_2,x_3x_4,\dots,x_{n-1}x_n)$. Since $M=\{\{1,2\},\{3,4\},\dots,\{n-1,n\}\}$ is a $k$-admissible matching for all $k=1,\dots,\nu(G)$, we have $\aim(G,k)=\frac{n}{2}$ for all $k=1,\dots,\nu(G)$, and $\indm(G)=\nu(G)=\frac{n}{2}$. In this case, the equality $\reg(I(G)^{[k]})=\aim(G,k)+k$ follows from \cite[Proposition 30]{EH2021}.
	
	Suppose $m>2$. We proceed by induction on $|V(G)|\ge3$. We assume $\nu(G)\ge3$. Indeed, $\reg(I(G)^{[1]})=\aim(G,1)+1$ and $\reg(I(G)^{[\nu(G)]})=\aim(G,\nu(G))+\nu(G)=2\nu(G)$ have already been proved in \cite[Theorem 4.7]{BHT2015} and  \cite[Theorem 5.1]{BHZN18} (see also \cite{EH2021}). So, if $\nu(G)\le 2$, there is nothing to prove. Thus, we may assume Setup \ref{SetupG}. Then, Corollary \ref{Cor:regRecursForest} together with the inductive hypothesis yield
	\begin{align*}
		\reg(I(G)^{[k]})&=\max\{\reg(I(G_1)^{[k]}),\reg(I(G_2)^{[k-1]})+2,\reg(I(G_3)^{[k]})+1\}\\
		&=\max\{\aim(G_1,k)+k,\aim(G_2,k-1)+k+1,\aim(G_3,k)+k+1\}.
	\end{align*}
	
	To prove the assertion, it is enough to show that
	\begin{equation}\label{eq:aimRecursive}
		\aim(G,k)=\max\{\aim(G_1,k),\aim(G_2,k-1)+1,\aim(G_3,k)+1\}.
	\end{equation}
	Firstly, we prove that
	\begin{equation}\label{eq:aimge}
		\aim(G,k)\ge\max\{\aim(G_1,k),\aim(G_2,k-1)+1,\aim(G_3,k)+1\}.
	\end{equation}
	Indeed, $\aim(G,k)\ge\aim(G_1,k)$ by Remark \ref{Rem:aim}. Moreover, by Lemma \ref{Lemma:aimEreyHibi} it follows that $\aim(G,k)\ge\aim(G_2,k-1)+1$, because $G_2=G\setminus\{n-1,n\}$ and $\{n-1,n\}$ is a distant edge. Finally, we prove $\aim(G,k)\ge\aim(G_3,k)+1$. Indeed, let $M$ be a $k$-admissible matching of $G_3$ with $|M|=\aim(G_3,k)$. Then $M=M_1\cup\dots\cup M_r$ with $|M_1|+\dots+|M_r|\le r+k-1$ as in Definition \ref{Def:kAdmMat}. Note that $\{n-1,n\}$ forms a gap with all $e\in E(G_3)$. Thus, setting $M'=M\cup\{\{n-1,n\}\}$ we have that $M'$ is a $k$-admissible matching of $G$, because $M'$ admits the $k$-admissible partition $M_1\cup\dots\cup M_r\cup\{\{n-1,n\}\}$ since
	$$
	|M_1|+\dots+|M_r|+|\{\{n-1,n\}\}|\le (r+1)+k-1.
	$$
	This shows that $\aim(G,k)\ge\aim(G_3,k)+1$.
	
	Now we show that (\ref{eq:aimRecursive}) holds. If $\aim(G,k)=\aim(G_2,k-1)+1$, then equality follows by (\ref{eq:aimge}). Suppose now that $\aim(G,k)>\aim(G_2,k-1)+1$. By Lemma \ref{Lemma:aimEreyHibiStessoG}, $\aim(G_2,k-1)+1\ge\aim(G_2,k)$. Hence $\aim(G,k)>\aim(G_2,k)$. So we can find a $k$-admissible matching $M$ of $G$ with $|M|=\aim(G,k)>\aim(G_2,k)$ such that $M=M_1\cup\dots\cup M_r$ is a $k$-admissible partition. Thus $n-1\in V(M)$, otherwise $M$ is also a $k$-admissible matching of $G_2$, which is impossible. One of the following possibilities occurs: $\{n-1,n-2\}\in M$, $\{n-1,n\}\in M$ or $\{n-1,i_j\}\in M$, $j\in[t]$.
	
	Suppose $\{n-1,n-2\}\in M$, up to relabeling we may assume $\{n-1,n-2\}\in M_1$. Set $M'=M\setminus\{\{n-1,n-2\}\}$. We claim that $M_1$ is a singleton. If not, then
	$$
	|M_1\setminus\{\{n-1,n-2\}\}|+|M_2|+\dots+|M_r|=|M|-1\le r+(k-1)-1.
	$$
	By the definition of admissible $(k-1)$-matching, the above inequality implies that $\aim(G_3,k-1)\ge|M|-1$. Hence, $\aim(G_2,k-1)+1\ge\aim(G_3,k-1)+1\ge |M|=\aim(G,k)$, against our assumption.
	
	Hence, $M_1=\{\{n-1,n-2\}\}$ and $M'$ is a $k$-admissible matching of $G_3$, because $|M_2|+\dots+|M_r|\le (r-1)+k-1$. Thus $\aim(G_3,k)+1\ge\aim(G,k)$. By (\ref{eq:aimge}), $\aim(G,k)\ge\aim(G_3,k)+1$. Hence $\aim(G,k)=\aim(G_3,k)+1$ and (\ref{eq:aimRecursive}) holds.
	
	The cases $\{n-1,n\}\in M$, $\{n-1,i_j\}\in M$, $j\in[t]$, can be treated similarly. Indeed, $N_G(n-1)=\{i_1,\dots,i_t,n-2,n\}$ with $t\ge0$, as stated in Setup \ref{SetupG}.
\end{proof}

\end{document}